\DeclareMathOperator{\trace}{trace}
\newtheorem{theorem}{Theorem}[section]
\newtheorem{lemma}[theorem]{Lemma}
\newtheorem{prop}[theorem]{Proposition}
\newtheorem{cor}[theorem]{Corollary}
\theoremstyle{definition}
\newtheorem{defi}[theorem]{Definition}
\theoremstyle{remark}
\newtheorem{remark}[theorem]{Remark}
\numberwithin{equation}{section}
\newcommand{\rd}{{\mathbb R^d}}
\newcommand{\Ker}{\operatorname{Ker}}
\begin{document}
\sloppy
\title[Hausdorff dimension of operator scaling stable random sheets]{The Hausdorff dimension of\\ the graph of operator scaling stable random sheets} 

\author{Ercan S\"onmez}
\address{Ercan S\"onmez, Mathematisches Institut, Heinrich-Heine-Universit\"at D\"usseldorf, Universit\"atsstr. 1, D-40225 D\"usseldorf, Germany}
\email{ercan.soenmez\@@{}hhu.de} 

\date{\today}

\begin{abstract}
We consider operator scaling $\alpha$-stable random sheets, which were introduced in \cite{Hoffm}. The idea behind such fields is to combine the properties of operator scaling $\alpha$-stable random fields introduced in \cite{BMS} and fractional Brownian sheets introduced in \cite{Kamont}. Based on the results derived in \cite{Hoffm}, we determine the box-counting dimension and the Hausdorff dimension of the graph of a trajectory of such fields over a non-degenerate cube $I \subset \mathbb{R}^d$.
\end{abstract}

\keywords{Fractional random fields, Stable random sheets, Operator scaling, selfsimilarity, Box-counting dimension, Hausdorff dimension}
\subjclass[2010]{Primary 60G60; Secondary 28A78, 28A80, 60G17, 60G52.}
\thanks{This work has been supported by Deutsche Forschungsgemeinschaft (DFG) under grant KE1741/ 6-1}
\maketitle

\baselineskip=18pt

\section{Introduction}

In this paper, we consider a harmonizable operator scaling $\alpha$-stable random sheet as introduced in \cite{Hoffm}. The main idea is to combine the properties of operator scaling $\alpha$-stable random fields and fractional Brownian sheets in order to obtain a more general class of random fields. Let us recall that a scalar valued random field $\{ X(x) : x \in \mathbb{R}^d \}$ is said to be \textit{operator scaling} for some matrix $E \in \mathbb{R}^{d \times d}$ and some $H>0$ if

\begin{equation}\label{OSSRF}
	\{ X( c^Ex) : x \in \mathbb{R}^d \} \stackrel{\rm f.d.}{=} \{ c^H X(x) : x \in \mathbb{R}^d \}  \quad \text{for all } c>0,
\end{equation}
where $\stackrel{\rm f.d.}{=}$ means equality of all finite-dimensional  marginal distributions and, as usual, $c^E = \sum_{k=0}^{\infty} \frac{(\log c)^k}{k!} E^k$ is the matrix exponential. These fields can be regarded as an anisotropic generalization of self-similar random fields (see, e.g., \cite{EmbrMaeh}), whereas the fractional Brownian sheet $\{ B_{H_1, \ldots, H_d} (x) : x \in \mathbb{R}^d \}$ with Hurst indices $H_1, \ldots, H_d >0$ can be seen as an anisotropic generalization of the well-known fractional Brownian field (see, e.g., \cite{Kolm}) and satisfies the scaling property \newpage
\begin{align*}
\{ B_{H_1, \ldots, H_d} (c_1x_1, \ldots, c_dx_d) : x =(x_1, \ldots, x_d) \in \mathbb{R}^d \} \\
\stackrel{\rm f.d.}{=} \{ c_1^{H_1} \ldots c_d^{H_d} B_{H_1, \ldots, H_d} (x) : x  \in \mathbb{R}^d \}
\end{align*}
for all constants $c_1, \ldots, c_d >0$. See \cite{AychXiao, Herbin, XiaoZhang} and the references therein for more information on the fractional Brownian sheet.

Throughout this paper, let $d= \sum_{j=1}^m d_j$ for some $m \in \mathbb{N}$ and $\tilde{E}_j \in \mathbb{R}^{d_j \times d_j}$, $j=1, \ldots, m$ be matrices with positive real parts of its eigenvalues. We define matrices $E_1, \ldots, E_m \in \mathbb{R}^{d \times d}$ as
\begin{equation*}
E_j = \left(
\begin{array}{ccccccccccccccccccccccc}
0 & & & & & & 0  \\
 &  \ddots & & & &  \\
 &  & 0 & & &  \\
 & & & \tilde{E}_j & & & \\
& & & &  0 &  &   \\
 & & & & & \ddots &  \\
0 & & & & & & 0
\end{array} \right) .
\end{equation*}
Further, we define the block diagonal matrix $E \in \mathbb{R}^{d \times d}$ as
\begin{equation*}
E = \sum_{j=1}^m E_j = \left(
\begin{array}{ccccccccccccccccccccccc}
E_1 & & 0  \\
 &  \ddots &   \\
0 & & E_m
\end{array} \right) .
\end{equation*}
In analogy to \cite[Definition 1.1.1]{Hoffm}, a random field $\{ X(x) : x \in \mathbb{R}^d \}$ is called \textit{operator scaling stable random sheet} if for some $H_1, \ldots, H_m >0$ we have
\begin{equation}\label{OSSRS}
	\{ X( c^{E_j}x) : x \in \mathbb{R}^d \} \stackrel{\rm f.d.}{=} \{ c^{H_j} X(x) : x \in \mathbb{R}^d \}
\end{equation}
for all $c>0$ and $j =1, \ldots, m$. Note that, by applying \eqref{OSSRS} iteratively, any operator scaling stable random sheet is also operator scaling for the matrix $E$ and the exponent $H= \sum_{j=1}^m H_j$ in the sense of \eqref{OSSRF}. Further, note that this definition is indeed a generalization of operator scaling random fields, since for $m=1, d =d_1$ and $E=E_1 = \tilde{E}_1$ \eqref{OSSRS} coincides with the definition introduced in \cite{BMS}. The existence of a random field satisfying \eqref{OSSRS} is guaranteed, since, taking $E_j=d_j=1$ for $j =1, \ldots, m$, an example is given by the fractional Brownian sheet. Operator scaling stable random sheets have been proven to be quite flexible in modeling physical phenomena and can be applied in order to extend the well-known Cahn-Hilliard phase-field model. See \cite{AHSW} and the references therein for more information.

Random fields satisfying a scaling property such as \eqref{OSSRF} or \eqref{OSSRS} are very popular in modeling, see \cite{Levy, WillPaxTaqq} and the references in \cite{BL} for some applications. Most of these fields are Gaussian. However, Gaussian fields are not always flexible for example in modeling heavy tail phenomena. For this purpose, $\alpha$-stable random fields have been introduced. See \cite{SamorodTaqq} for a good introduction to $\alpha$-stable random fields.

Using a moving average and a harmonizable representation, the authors in \cite{BMS} defined and analyzed two different classes of symmetric $\alpha$-stable random fields satisfying \eqref{OSSRF}. In the Gaussian case $\alpha = 2$ results about certain H\"older conditions and the Hausdorff dimension have been obtained. Similar results in the stable case $\alpha \in (0,2)$ have been derived in \cite{BL} for the harmonizable representation. Following the outline in \cite{BMS, BL}, this two classes were generalized to random fields satisfying \eqref{OSSRS} in \cite{Hoffm}. Similar results about H\"older conditions have been obtained. The fields constructed in \cite{BMS} have stationary increments, i.e. they satisfy
$$ \{X( x+h) - X(h) : x \in \mathbb{R}^d \} \stackrel{\rm f.d.}{=} \{X( x) : x \in \mathbb{R}^d \} \quad \text{for all } h \in \mathbb{R}^d .$$
This property has been proven to be quite useful in studying the sample path properties. However, the property of stationary increments is no more true for the fields constructed in \cite{Hoffm}. The absence of this property seems to be one of the main difficulties in determining results about the Hausdorff dimension.

Another main tool in studying sample paths of operator scaling stable random sheets are polar coordinates with respect to the matrices $E_j, j=1, \ldots, m$ introduced in \cite{MeersScheff} and used in \cite{BMS, BL}. If $\{X( x) : x \in \mathbb{R}^d \}$ is an operator scaling symmetric $\alpha$-stable random sheet with $\alpha = 2$, using \eqref{OSSRS} one can write the variance of $X( x), x \in \mathbb{R}^d$, as
$$ \mathbb{E} [X^2 (x) ] = \tau_{E_j} (x) ^{2H_j} \mathbb{E} [X^2 \big( l_{E_j} (x) \big) ],$$
where $\tau_{E_j} (x)$ is the radial part of $x$ with respect to $E_j$ and $l_{E_j} (x)$ is its polar part. Therefore, in the Gaussian case information about the behavior of the polar coordinates $\big( \tau_{E_j} (x), l_{E_j} (x) \big)$ contains information about the sample path regularity. This property also holds in the stable case $\alpha \in (0,2)$.

This paper is organized as follows. In Section 2, we introduce the main tools we need for the study in this paper. Since our main focus will be on Hausdorff dimension and box-counting dimension, we recall their definition and some related results in Section 2.1. Section 2.2 is devoted to a spectral decomposition result from \cite{MeersScheff} and Section 2.3 is about the change to polar coordinates with respect to scaling matrices. In Section 3, we present the results derived in \cite{Hoffm} on harmonizable and moving average representations of operator scaling $\alpha$-stable random sheets. Here, we will only focus on a harmonizable representation. Based on this results and taking into account methods used in \cite{Aych, BMS, BL, Xiao1}, in Section 4 we present our main results on the Hausdorff dimension and box-counting dimension of the graph of harmonizable operator scaling stable random sheets.

\section{Preliminaries}

\subsection{Hausdorff dimension and box-counting dimension} 

Let $B \subset \mathbb{R}^d$ and $s \geq 0$. The $s$-dimensional Hausdorff measure of $B$ is defined by
\begin{equation*}
	\mathcal{H}^{s}(B)=\lim_{\varepsilon\downarrow0}\inf\left\{\sum_{k=1}^{\infty}|B_{k}|^{s}:\,B\subset\bigcup_{k=1}^{\infty}B_{k},\; |B_{k}|\leq \varepsilon\right\}.
\end{equation*}
One can easily show that $\mathcal{H}^{s}(B) < \infty$ implies $\mathcal{H}^{t}(B) = 0$ for all $t>s$ (see \cite[Chapter 2.2]{Fal}). Thus, there exists a critical value, denoted by $\dim_{\mathcal{H}} B$, such that
\begin{equation*}
\dim_{\mathcal{H}} B=\inf\{s \geq 0:\,\mathcal{H}^{s}(B)=0\}=\sup\{s \geq 0:\,\mathcal{H}^{s}(B)=\infty\}.
\end{equation*}
$\dim_{\mathcal{H}} B$ is called the Hausdorff dimension of $B$. Now, in addition assume that $B$ is non-empty and bounded. For $\varepsilon > 0$, let
$$ M (B, \varepsilon ) = \min \{ k \in \mathbb{N} : B \subset \bigcup_{i=1}^k B_\varepsilon (x_i) \} $$
be the smallest number of balls of diameter at most $\varepsilon$ which can cover $B$. The lower and upper box-counting dimensions of $B$ respectively are defined as
\begin{align*}
 \underline{\dim_{\mathcal{B}}} B & = \liminf_{ \varepsilon \downarrow 0} \frac{\log M(B, \varepsilon)}{- \log \varepsilon}  \\
 \overline{\dim_{\mathcal{B}}} B & = \limsup_{ \varepsilon \downarrow 0} \frac{\log M(B, \varepsilon)}{- \log \varepsilon}.
\end{align*}
If these are equal we refer to the common value as the box-counting dimension of $B$, denoted by $\dim_{\mathcal{B}} B$, i.e.
$$ \dim_{\mathcal{B}} B = \lim_{ \varepsilon \downarrow 0} \frac{\log M(B, \varepsilon)}{- \log \varepsilon} .$$
See \cite[Chapter 3.1]{Fal} for equivalent definitions of $\dim_{\mathcal{B}} B$.

In order to determine the Hausdorff dimension of $B$, one usually gives an upper bound and a lower bound for $\dim_{\mathcal{H}} B$. The following result will be useful for us in order to find an upper bound for $\dim_{\mathcal{H}} B$. See \cite[Corollary 11.2]{Fal}.
\begin{lemma}\label{upperbound}
Let $f : \mathbb{R}^d \mapsto \mathbb{R}$ and denote by $G_f \big( [0,1]^d \big) = \{ \big( x, f(x) \big) : x \in [0,1]^d \} \subset \mathbb{R}^{d+1}$ the graph of $f$ over the unit cube. Suppose that $f$ satisfies a uniform H\"older condition of order $s$ for some $s \in (0,1]$, i.e. there exists a constant $C>0$ such that
$$ | f(t) - f(u) | \leq C \| t-u \|^s $$
for all $t,u \in [0,1]^d$. Then we have
$$ \dim_{\mathcal{H}} G_f \big( [0,1]^d \big) \leq \overline{\dim_{\mathcal{B}}} G_f \big( [0,1]^d \big) \leq d+1-s.$$
\end{lemma}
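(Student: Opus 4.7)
The plan is to prove the two inequalities separately: first the general fact that $\dim_{\mathcal{H}} B \leq \overline{\dim_{\mathcal{B}}} B$ for any non-empty bounded $B$, and then the covering bound $\overline{\dim_{\mathcal{B}}} G_f([0,1]^d) \leq d+1-s$ that uses the H\"older regularity of $f$.

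For the first inequality, I would fix $s > \overline{\dim_{\mathcal{B}}} B$ and argue directly from the definitions. By the definition of $\overline{\dim_{\mathcal{B}}}$, there is a sequence $\varepsilon_n \downarrow 0$ with $M(B,\varepsilon_n) \leq \varepsilon_n^{-s}$ for all large $n$. Covering $B$ by the corresponding balls gives $\sum_k |B_k|^t \leq M(B,\varepsilon_n)\varepsilon_n^t \leq \varepsilon_n^{t-s} \to 0$ for any $t > s$, so $\mathcal{H}^t(B) = 0$ and thus $\dim_{\mathcal{H}} B \leq s$. Letting $s \downarrow \overline{\dim_{\mathcal{B}}} B$ yields the claim. (This is exactly the content of the first part of the chain in the lemma.)

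For the second inequality, I would use the standard cube-partitioning covering argument. Given $\varepsilon \in (0,1)$, partition $[0,1]^d$ into roughly $N = \lceil 1/\varepsilon \rceil^d \asymp \varepsilon^{-d}$ closed axis-parallel cubes of side length $\varepsilon$. On any such cube $Q$, the H\"older assumption gives
\begin{equation*}
\sup_{t,u \in Q} |f(t) - f(u)| \leq C \| t - u \|^s \leq C (\sqrt{d}\,\varepsilon)^s =: C' \varepsilon^s,
\end{equation*}
so the portion of the graph above $Q$ is contained in a box of side lengths $\varepsilon, \ldots, \varepsilon, C'\varepsilon^s$. Since $s \in (0,1]$ and $\varepsilon < 1$, we have $C' \varepsilon^s \geq \varepsilon$ (for $\varepsilon$ small enough relative to $C'$), so this box can be covered by at most $\lceil C'\varepsilon^{s-1}\rceil + 1$ balls of diameter $\varepsilon$ stacked along the last coordinate. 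Summing over all $N$ cubes yields $M(G_f([0,1]^d), \varepsilon) \leq C'' \varepsilon^{-d} \cdot \varepsilon^{s-1} = C'' \varepsilon^{-(d+1-s)}$, and inserting this into the definition of $\overline{\dim_{\mathcal{B}}}$ gives
\begin{equation*}
\overline{\dim_{\mathcal{B}}} G_f([0,1]^d) \leq \limsup_{\varepsilon \downarrow 0} \frac{\log(C'' \varepsilon^{-(d+1-s)})}{-\log \varepsilon} = d+1-s.
\end{equation*}

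There is no real obstacle here: both steps are standard covering arguments and the only thing one has to be careful with is distinguishing the cube partition (counted by $\varepsilon^{-d}$) from the vertical stacking of balls that handles the oscillation (contributing the factor $\varepsilon^{s-1}$). The dependence on $d$ enters only through the harmless constant $\sqrt{d}^{\,s}$, and the condition $s \leq 1$ is exactly what guarantees $\varepsilon^{s-1} \geq 1$ so that the vertical stacking actually increases the count rather than decreasing it.
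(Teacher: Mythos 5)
Your proof is correct; the paper does not prove this lemma itself but quotes it from Falconer's book (Corollary 11.2 there), and your two-step argument --- the general inequality $\dim_{\mathcal{H}} B \leq \overline{\dim_{\mathcal{B}}} B$ obtained by turning an efficient $\varepsilon$-cover into a Hausdorff-measure estimate, followed by the cube-partition and vertical-stacking count giving $M\big(G_f([0,1]^d),\varepsilon\big) \leq C''\varepsilon^{-(d+1-s)}$ --- is exactly the standard proof found in that reference. The only cosmetic imprecision is that a ball of diameter $\varepsilon$ in $\mathbb{R}^{d+1}$ does not contain a cube of side $\varepsilon$, but replacing $\varepsilon$ by $\sqrt{d+1}\,\varepsilon$ throughout only changes multiplicative constants and so does not affect the $\limsup$.
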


Now let $B\subset\rd$ be a Borel set and let $\mathcal M^1 \big( B, \mathcal{B} (B) \big)$ be the set of probability measures on $B$. For $s>0$ the $s$-energy of $\mu \in \mathcal M^1 \big( B, \mathcal{B} (B) \big)$ is defined by
$$I_s(\mu)=\int_B\int_B\frac{\mu(dx)\mu(dy)}{\|x-y\|^s}.$$
In order to find a lower bound for $\dim_{\mathcal{H}} B$, by Frostman's theorem (see, e.g., \cite{Adler, Fal, Kahane}), it suffices to show that there exists a probabilty measure $\mu \in \mathcal M^1 \big( B, \mathcal{B} (B) \big)$ with finite $s$-energy. To be more precise, if there exists $\mu \in \mathcal M^1 \big( B, \mathcal{B} (B) \big)$ with
\begin{equation}\label{fse}
I_s (\mu) < \infty
\end{equation}
for some $s>0$, then we have $\dim_{\mathcal{H}} B \geq s$ (see also \cite[Theorem 4.27]{MoertPeres}). If we consider the random graph $G_X \big( [0,1]^d \big) = \{ \big( x, X(x) \big) : x \in [0,1]^d \}$ of a random field $\{ X(x) : x \in \mathbb{R}^d \}$ over the unit cube, a typical choice of a (random) probabilty measure 
$$\mu \in \mathcal M^1 \Bigg( X \big( [0,1]^d \big) , \mathcal{B} \Big( X \big( [0,1]^d \big) \Big) \Bigg) $$
is the occupation measure given by
$$ \mu (F) = \int_{[0,1]^d} \mathbbm{1}_{ \big\{ ( x, X(x) ) \in F \big\} } dx, \quad \text{for }F \in \mathcal{B} \big( [0,1]^{d+1} \big) . $$
Then, by a monotone class argument, it is easy to see that \eqref{fse} is equivalent to
$$ \int_{ [0,1]^d \times [0,1]^d } \big( \| x-y \|^2 + |X(x) - X(y)|^2 \big) ^{-\frac{s}{2}} dx dy < \infty, $$
which almost surely follows from
\begin{equation}\label{frostman}
\int_{ [0,1]^d \times [0,1]^d } \mathbb{E} \Big[ \big( \| x-y \|^2 + |X(x) - X(y)|^2 \big) ^{-\frac{s}{2}} \Big] dx dy < \infty.
\end{equation}

\subsection{Spectral decomposition}

Let $A \in \mathbb{R}^{d \times d}$ be a matrix with $p$ distinct positive real parts of its eigenvalues $0 < a_1 < \ldots < a_p$ for some $p \leq d$. Factor the minimal polynomial of $A$ into $f_1, \ldots, f_p$, where all roots of $f_i$ have real part equal to $a_i$, and define $V_{i}=\Ker\big( f_{i}(A) \big)$. Then, by \cite[Theorem 2.1.14]{MeersScheff},
$$\rd=V_{1}\oplus\ldots\oplus V_{p}$$
is a direct sum decomposition, i.e. we can write any $x \in \mathbb{R}^d$ uniquely as
$$ x = x_1 + \ldots + x_p$$
for $x_i \in V_i$, $1 \leq i \leq p$. Further, we can choose an inner product on $\rd$ such that the subspaces $V_1, \ldots, V_p$ are mutually orthogonal. Throughout this paper, for any $x \in \rd$ we will choose $ \| x \| = \langle x,x\rangle^{1/2}$ as the corresponding Euclidean norm.

\subsection{Polar coordinates}

We now recall the results about the change to polar coordinates used in \cite{BMS, BL}. As before, let $A \in \mathbb{R}^{d \times d}$ be a matrix with positive real parts of its eigenvalues $0 < a_1 < \ldots < a_p$ for some $p \leq d$. According to \cite[Section 3]{BL}, one can write any $x \in \rd \setminus \{0 \}$ uniquely as
\begin{equation}\label{pc}
x = \tau_A (x)^A l_A (x) ,
\end{equation}
where $\tau_A (x) >0$ is called the radial part of $x$ with respect to $A$ and $l_A(x) \in \{ x \in \rd : \tau_A (x) = 1 \}$ is called the direction. It is clear that $\tau_A (x) \rightarrow \infty$ as $\| x\| \rightarrow \infty$ and $\tau_A (x) \rightarrow 0$ as $\| x\| \rightarrow 0$. Further, one can extend $\tau_A( \cdot )$ continuously to $\rd$ by setting $\tau_A(0) = 0$. Note that, by \eqref{pc}, it is straightforward to see that $\tau_A( \cdot )$ satisfies
$$\tau_A( c^A x) = c \cdot \tau_A(x) \quad \text{for all } c>0.$$
Such functions are called $A$-homogeneous.

Let us recall a result about bounds on the growth rate of $\tau_A( \cdot )$ in terms of $a_1, \ldots, a_p$ established in \cite{BMS}.

\begin{lemma}\label{pcbounds}
Let $\varepsilon >0$ be small enough. Then there exist constants $K_1, \ldots, K_4 >0$ such that
$$ K_1 \| x \| ^{\frac{1}{a_1}+\varepsilon} \leq \tau_A(x) \leq   K_2 \| x \| ^{\frac{1}{a_p}-\varepsilon}$$
for all $x$ with $\tau_A(x) \leq 1$, and
$$ K_3 \| x \| ^{\frac{1}{a_p}-\varepsilon} \leq \tau_A(x) \leq   K_4 \| x \| ^{\frac{1}{a_1}+\varepsilon}$$
for all $x$ with $\tau_A(x) \geq 1.$
\end{lemma}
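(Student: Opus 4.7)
The plan is to invert the polar representation $x = \tau_A(x)^A l_A(x)$ to read off $\tau_A(x)$ from $\|x\|$. The heart of the argument is a two-sided estimate on $\|c^A y\|$ whose exponents agree with the extremal eigenvalue real parts $a_1$ and $a_p$, up to an arbitrarily small $\delta$-loss that absorbs Jordan-block corrections.

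First, I would reduce to the invariant subspaces $V_1, \ldots, V_p$ of the spectral decomposition of Section 2.2. On each $V_i$ the restriction of $A$ has all eigenvalues with real part $a_i$, so Jordan form yields, for every $\delta > 0$, constants $\kappa(\delta), K(\delta) > 0$ with
$$\kappa(\delta) e^{(a_i - \delta)t} \|y\| \leq \|e^{At}y\| \leq K(\delta) e^{(a_i + \delta)t} \|y\|$$
for all $y \in V_i$ and $t \geq 0$; the lower bound comes from applying the upper bound to $-A$ and using $\|e^{At}y\| \geq \|y\|/\|e^{-At}\|$. Assembling over the orthogonal decomposition and setting $c = e^t$, this gives
$$\kappa'(\delta) c^{a_1 - \delta} \|y\| \leq \|c^A y\| \leq K'(\delta) c^{a_p + \delta} \|y\|$$
for $c \geq 1$ and, via $c \mapsto 1/c$, the companion estimate
$$\kappa'(\delta) c^{a_p + \delta} \|y\| \leq \|c^A y\| \leq K'(\delta) c^{a_1 - \delta} \|y\|$$
for $0 < c \leq 1$.

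Next, I would apply these bounds with $c = \tau_A(x)$ and $y = l_A(x)$. Since $\tau_A$ is continuous, the unit $\tau_A$-sphere $S_A = \{y \in \rd : \tau_A(y) = 1\}$ is compact and avoids the origin, so $\|l_A(x)\|$ is pinched between two positive constants, and the matrix estimates translate into
$$K_3^{*} \tau_A(x)^{a_1 - \delta} \leq \|x\| \leq K_4^{*} \tau_A(x)^{a_p + \delta} \qquad \text{for } \tau_A(x) \geq 1,$$
and the analogous pair with $a_1$ and $a_p$ swapped for $\tau_A(x) \leq 1$.

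Finally, I would invert each of the four resulting inequalities and translate $\delta$ into $\varepsilon$. For instance, $\|x\| \leq K_4^{*} \tau_A(x)^{a_p + \delta}$ gives $\tau_A(x) \geq (\|x\|/K_4^{*})^{1/(a_p + \delta)}$; choosing $\delta$ so small that $1/(a_p + \delta) \geq 1/a_p - \varepsilon$, and using that $\|x\|$ stays bounded away from $0$ on $\{\tau_A(x) \geq 1\}$, the multiplicative constant can be absorbed to produce $\tau_A(x) \geq K_3 \|x\|^{1/a_p - \varepsilon}$. The other three inequalities are identical, using also that $\|x\|$ is bounded above on $\{\tau_A(x) \leq 1\}$. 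The main obstacle is the Jordan-form bookkeeping in the first step: the polynomial-in-$t$ factors from nilpotent parts of $A$ must be uniformly dominated by $e^{\delta t}$ as $t \to \pm\infty$, and the constants must be tracked carefully enough that the final passage from $\delta$ to $\varepsilon$ yields clean estimates on both sides of $\tau_A(x) = 1$.
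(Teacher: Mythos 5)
Your proposal is correct and is essentially the standard argument: the paper itself gives no proof of Lemma \ref{pcbounds}, importing it from \cite{BMS}, and the proof there proceeds exactly as you do --- Jordan-form growth bounds for $\|c^A y\|$ on the spectral subspaces with an $e^{\delta t}$ absorption of the polynomial factors, compactness of the unit sphere $\{\tau_A = 1\}$ to pinch $\|l_A(x)\|$, and inversion of the resulting inequalities with the multiplicative constants absorbed on the regions $\{\tau_A \leq 1\}$ and $\{\tau_A \geq 1\}$. No gaps.
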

We remark that the bounds on the growth rate of $\tau_A (\cdot )$ have been improved in \cite[Proposition 3.3]{BL}, but the bounds given in Lemma \ref{pcbounds} suffice for our purposes.

\section{Harmonizable operator scaling random sheets}

We consider harmonizable operator scaling stable random sheets defined in \cite{Hoffm} and present some related results established in \cite{Hoffm}. Most of these will also follow from the results derived in \cite{BMS, BL}. Throughout this paper, for $j =1, \ldots, m$ assume that the real parts of the eigenvalues of $\tilde{E}_j$ are given by $0< a_1^j< \ldots < a_{p_j}^j$ for some $p_j \leq d_j$. Let $q_j =  \trace (\tilde{E}_j ) $. Suppose that $\psi _j : \mathbb{R}^{d_j} \mapsto [0, \infty ) $ are continuous $\tilde{E}_j^T$-homogeneous functions, which means according to \cite[Definition 2.6]{BMS} that
$$\psi _j ( c^{\tilde{E}_j^T} x) = c \psi (x) \quad \text{for all } c>0.$$
Moreover, we assume that $\psi_j(x) \neq 0$ for $x \neq 0$. See \cite{BMS, BL} for various examples of such functions.

Let $0 \leq \alpha \leq 2$ and $W_{\alpha} (d \xi)$ be a complex isotropic symmetric $\alpha$-stable random measure on $\rd$ with Lebesgue control measure (see \cite[Chaper 6.3]{SamorodTaqq}).

\begin{theorem}\label{existence}
For any vector $x \in \rd$ let $x= (x_1, \ldots, x_m) \in \mathbb{R}^{d_1} \times \ldots \times \mathbb{R}^{d_m} = \rd$.  The random field
\begin{equation}\label{harmonizable}
X_{\alpha} (x) = \operatorname{Re} \int_{\rd} \prod_{j=1}^m ( e^{i \langle x_j, \xi_j \rangle} -1 ) \psi_j (\xi_j) ^{-H_j - \frac{q_j}{\alpha}} W_{\alpha} (d \xi) , \quad x \in \rd
\end{equation}
exists and is stochastically continuous if and only if $H_j \in (0, a_1^j)$ for all $j=1, \ldots, m$.
\end{theorem}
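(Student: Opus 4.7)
The plan is to use the standard criterion from \cite[Chapter 6.3]{SamorodTaqq}: the harmonizable integral \eqref{harmonizable} defines an almost surely finite symmetric $\alpha$-stable random variable if and only if its integrand lies in $L^\alpha(\mathbb{R}^d, d\xi)$, i.e.
\begin{equation*}
I(x) \;=\; \int_{\mathbb{R}^d} \prod_{j=1}^m |e^{i\langle x_j, \xi_j\rangle} - 1|^\alpha \, \psi_j(\xi_j)^{-\alpha H_j - q_j} \, d\xi \;<\; \infty .
\end{equation*}
Because the integrand factorises along the decomposition $\mathbb{R}^d = \mathbb{R}^{d_1}\times\cdots\times\mathbb{R}^{d_m}$, Tonelli's theorem yields $I(x) = \prod_{j=1}^m I_j(x_j)$ with
\begin{equation*}
I_j(x_j) \;=\; \int_{\mathbb{R}^{d_j}} |e^{i\langle x_j, \xi_j\rangle} - 1|^\alpha \, \psi_j(\xi_j)^{-\alpha H_j - q_j} \, d\xi_j ,
\end{equation*}
so the whole problem reduces to the single-factor integrability condition, which is exactly the one considered for harmonizable operator scaling stable fields in \cite{BMS, BL}.

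To control each $I_j(x_j)$ I would pass to polar coordinates with respect to $\tilde E_j^T$ (which has the same eigenvalues as $\tilde E_j$): write $\xi_j = \tau^{\tilde E_j^T} l$ with $\tau \in (0,\infty)$ and $l$ on the sphere $\{\tau = 1\}$; the Jacobian is $\tau^{q_j - 1}$ against a finite measure on the sphere, and the $\tilde E_j^T$-homogeneity of $\psi_j$ gives $\psi_j(\xi_j)^{-\alpha H_j - q_j} = \tau^{-\alpha H_j - q_j}\psi_j(l)^{-\alpha H_j - q_j}$. Splitting $I_j$ at $\tau = 1$ and using $|e^{iy}-1| \leq \min(2,|y|)$, the tail $\tau \geq 1$ reduces to $\int_1^\infty \tau^{-\alpha H_j - 1}d\tau$ (finite iff $H_j > 0$), while the low-frequency part $\tau \leq 1$ is bounded by $C\|x_j\|^\alpha \int_0^1 \tau^{\alpha(a_1^j - \eta - H_j) - 1}d\tau$ after inserting $\|\xi_j\|\leq C\tau^{a_1^j - \eta}$ from Lemma \ref{pcbounds} with $\eta > 0$ arbitrarily small (finite iff $H_j < a_1^j$). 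The sphere integral of $\psi_j(l)^{-\alpha H_j - q_j}$ is finite by continuity and strict positivity of $\psi_j$ away from the origin. For necessity one needs matching lower bounds: at large $\tau$ one restricts to $\xi_j$ on which $|e^{i\langle x_j,\xi_j\rangle}-1|$ stays bounded below, and at small $\tau$ one uses $|e^{iy}-1|\geq c|y|$ on a cone in $\xi_j$ along which $\langle x_j,\xi_j\rangle$ remains comparable in size to $\|\xi_j\|$. Once existence holds, stochastic continuity follows from the $L^\alpha$-isometry of the stable integral combined with dominated convergence, where the dominating function is obtained from the very same two-regime estimate on a compact neighbourhood of $x$.

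The main obstacle will be the endpoint cases $H_j\in\{0, a_1^j\}$, where the upper-bound integrals are only logarithmically divergent and the lower-bound construction must be sharp. For $H_j = a_1^j$ one must exhibit a cone in $\xi_j$ of positive surface measure on the polar sphere on which $\|\xi_j\|$ grows exactly like $\tau^{a_1^j}$ (not $\tau^{a_1^j - \eta}$) and $\langle x_j, \xi_j\rangle$ is of comparable size, so that the small-$\tau$ integrand is genuinely of order $\tau^{-1}$ and divergence at $0$ is forced; for $H_j = 0$ one analogously exploits the Bohr-type recurrence of the exponential along rays to produce divergence at $\infty$. This is the only non-routine step; the analogous single-factor argument is carried out in detail in \cite[Theorem 4.1]{BMS} and in \cite{BL}, and I would adapt it factor-by-factor and then close the loop via Tonelli to recover the joint condition $H_j \in (0, a_1^j)$ for all $j = 1,\ldots,m$.
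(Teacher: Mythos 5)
Your proposal is correct and follows essentially the same route as the paper: reduce the stable-integral existence criterion to the finiteness of the product $\prod_{j=1}^m \Gamma_\alpha^j(x)$ via Tonelli, and then invoke the single-block result of \cite[Theorem 4.1]{BMS} (whose polar-coordinate proof you sketch, accurately) for each factor, with stochastic continuity following from the same $L^\alpha$ estimates. The paper simply cites \cite{BMS} for the one-factor integrability and continuity rather than re-deriving them, but the underlying argument is the one you describe.
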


\begin{proof}
This result has been proven in detail in \cite{Hoffm}, but it also follows as an easy consequence of \cite[Theorem 4.1]{BMS}. By the definition of stable integrals (see \cite{SamorodTaqq}), $X_\alpha (x)$ exists if and only if
$$ \Gamma_\alpha (x) = \int _{\rd} \prod_{j=1}^m | e^{i \langle x_j, \xi_j \rangle} -1 | ^\alpha \psi_j (\xi_j) ^{-\alpha H_j - q_j} d \xi < \infty ,$$
but this is equivalent to
$$ \Gamma^j_\alpha (x) = \int _{\mathbb{R}^{d_j}} | e^{i \langle x_j, \xi_j \rangle} -1 | ^\alpha \psi_j (\xi_j) ^{-\alpha H_j - q_j} d \xi_j < \infty ,$$
for all $j=1, \ldots, m$. Since, in \cite[Theorem 4.1]{BMS}, it is shown that $\Gamma_\alpha^j (x)$ is finite if and only if $H_j \in (0, a_1^j)$ the statement follows. The stochastic continuity can be deduced similarly as a consequence of \cite[Theorem 4.1]{BMS}.
\end{proof}

Note that from \eqref{harmonizable} it follows that $X_\alpha (x) = 0$ for all $x= (x_1, \ldots, x_m) \in \mathbb{R}^{d_1} \times \ldots \times \mathbb{R}^{d_m} = \rd$ such that $x_j = 0$ for at least one $j \in \{ 1, \ldots, m \}$. 

The following result has been established in \cite[Corollary 4.2.1]{Hoffm}. The proof is carried out as the proof of \cite[Corollary 4.2 (a)]{BMS} via characteristic functions of stable integrals and by noting that $c^{E_j} x = (x_1, \ldots, x_{j-1}, c^{\tilde{E}_j} x_j, x_{j+1}, \ldots, x_m )$ for all $c>0$ and $x = (x_1, \ldots, x_m ) \in \mathbb{R}^{d_1} \times \ldots \times \mathbb{R}^{d_m} = \rd$.

\begin{cor}\label{operatorscaling}
Under the conditions of Theorem \ref{existence}, the random field $\{ X_\alpha (x) : x \in \rd \}$ is operator scaling in the sense of \eqref{OSSRS}, that is, for any $c>0$
\begin{equation}\label{operatorscalingharm}
\{ X (c^{E_j} x) : x \in \rd \} \stackrel{\rm f.d.}{=} \{ c^{H_j} X(x) : x \in \rd \}.
\end{equation}
\end{cor}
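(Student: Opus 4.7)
The plan is to verify \eqref{operatorscalingharm} by showing that the characteristic functions of arbitrary finite-dimensional distributions on both sides agree, which determines the law of a symmetric $\alpha$-stable field. Fix $c>0$, $j \in \{1, \ldots, m\}$, $n \in \mathbb{N}$, real coefficients $\theta_1, \ldots, \theta_n$, and points $y_1, \ldots, y_n \in \mathbb{R}^d$ with blocks $y_k = (y_{k,1}, \ldots, y_{k,m})$. Writing
\[ f_y(\xi) = \prod_{i=1}^m (e^{i \langle y_i, \xi_i\rangle} - 1) \psi_i(\xi_i)^{-H_i - q_i/\alpha}, \]
the standard formula for the characteristic function of the real part of a complex isotropic $\alpha$-stable integral (see \cite[Ch.~6.3]{SamorodTaqq}) gives
\[ \mathbb{E}\Big[\exp\Big(i\sum_{k=1}^n \theta_k X_\alpha(c^{E_j}y_k)\Big)\Big] = \exp\Big(-C_\alpha \int_{\mathbb{R}^d} \Big|\sum_{k=1}^n \theta_k f_{c^{E_j}y_k}(\xi)\Big|^\alpha d\xi\Big) \]
for a constant $C_\alpha>0$, and similarly with $f_{c^{H_j}y_k}$ replaced by $c^{H_j}f_{y_k}$ on the right-hand side of \eqref{operatorscalingharm}.

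The core computation is the change of variables. Using the block structure observation $c^{E_j} y_k = (y_{k,1}, \ldots, y_{k,j-1}, c^{\tilde{E}_j}y_{k,j}, y_{k,j+1}, \ldots, y_{k,m})$ and the identity $\langle c^{\tilde{E}_j} y_{k,j}, \xi_j\rangle = \langle y_{k,j}, c^{\tilde{E}_j^T}\xi_j\rangle$, substitute $\eta_j = c^{\tilde{E}_j^T}\xi_j$ in the $j$-th block and leave the remaining coordinates untouched. The Jacobian contributes $d\xi_j = c^{-q_j}d\eta_j$ since $\det(c^{\tilde{E}_j^T}) = c^{\operatorname{trace}(\tilde{E}_j)} = c^{q_j}$. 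The $\tilde{E}_j^T$-homogeneity of $\psi_j$ yields $\psi_j(c^{-\tilde{E}_j^T}\eta_j) = c^{-1}\psi_j(\eta_j)$, hence $\psi_j(\xi_j)^{-H_j - q_j/\alpha} = c^{H_j + q_j/\alpha}\psi_j(\eta_j)^{-H_j - q_j/\alpha}$. All other factors in $f_{c^{E_j}y_k}$ are invariant under this substitution.

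Pulling the $k$-independent prefactor $c^{H_j + q_j/\alpha}$ out of the sum, taking $|\cdot|^\alpha$ brings it to the power $c^{\alpha H_j + q_j}$, and combining with the Jacobian factor $c^{-q_j}$ leaves exactly $c^{\alpha H_j}$. Consequently
\[ \int_{\mathbb{R}^d} \Big|\sum_{k=1}^n \theta_k f_{c^{E_j}y_k}(\xi)\Big|^\alpha d\xi = c^{\alpha H_j}\int_{\mathbb{R}^d} \Big|\sum_{k=1}^n \theta_k f_{y_k}(\eta)\Big|^\alpha d\eta, \]
so the characteristic function equals $\exp(-C_\alpha \int |\sum_k \theta_k\, c^{H_j}f_{y_k}|^\alpha d\eta)$, which is exactly the characteristic function of $\sum_{k=1}^n \theta_k c^{H_j}X_\alpha(y_k)$. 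By the Cramér--Wold device this establishes the required finite-dimensional equality.

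No step presents a genuine obstacle; the only potential pitfall is the bookkeeping of exponents of $c$ (Jacobian, homogeneity, and power $\alpha$), which must cancel to produce precisely $c^{\alpha H_j}$. The argument is a direct transcription of the proof of \cite[Corollary 4.2(a)]{BMS}, with the single new ingredient being the block-diagonal structure of $E_j$ that allows the change of variables to touch only the $j$-th group of coordinates.
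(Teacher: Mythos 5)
Your proof is correct and follows exactly the route the paper indicates: the paper's "proof" is a pointer to \cite[Corollary 4.2.1]{Hoffm} and \cite[Corollary 4.2 (a)]{BMS}, namely the characteristic-function computation with the block observation $c^{E_j}x=(x_1,\ldots,c^{\tilde E_j}x_j,\ldots,x_m)$, which is precisely what you carry out. The exponent bookkeeping ($c^{\alpha H_j+q_j}$ from homogeneity against $c^{-q_j}$ from the Jacobian) checks out.
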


As we shall see below, fractional Brownian sheets fall into the class of random fields given by \eqref{harmonizable}. It is known that a fractional Brownian sheet does not have stationary increments. Thus, in general, a random field given by \eqref{harmonizable} does not possess stationary increments. But it satisfies a slightly weaker property, as the following statement shows.

\begin{cor}\label{stationary}
Let $x = (x_1, \ldots, x_m ) \in \mathbb{R}^{d_1} \times \ldots \times \mathbb{R}^{d_m}$. Under the conditions of Theorem \ref{existence}, for any $h \in \mathbb{R}^{d_j}$, $j =1, \ldots, m$ the random field $\{ X_\alpha (x) : x \in \rd \}$ satisfies
\begin{align*}
X_\alpha (x_1, \ldots, x_{j-1}, x_j & +h, x_{j+1}, \ldots, x_m ) - X_\alpha (x_1, \ldots, x_{j-1}, h, x_{j+1}, \ldots, x_m ) \\
& \stackrel{\rm d}{=}  X_\alpha (x_1, \ldots, x_{j-1}, h, x_{j+1}, \ldots, x_m ),
\end{align*}
where $\stackrel{\rm d}{=}$ means equality in distribution.
\end{cor}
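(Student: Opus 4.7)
\medskip
\noindent\textbf{Proof plan.}
The plan is to combine the difference on the left-hand side into a single harmonizable stable integral via linearity of the stable integral, and then to show that the resulting integrand differs from the one representing $X_\alpha(x_1,\ldots,x_m)$ only by a unimodular phase factor in the $j$-th frequency variable. Since $W_\alpha$ is a complex isotropic symmetric $\alpha$-stable random measure with Lebesgue control, such a phase factor should not alter the law of the real part of the integral, which will yield the claimed equality in distribution with $X_\alpha$ evaluated at the natural (untranslated) argument.

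The first step is to apply \eqref{harmonizable} to each of the two terms on the left and to factor out the product over the indices $k\neq j$, which is common to both and so cancels from the difference. This gives
\begin{align*}
&X_\alpha(x_1,\ldots,x_j+h,\ldots,x_m) - X_\alpha(x_1,\ldots,h,\ldots,x_m) \\
&\quad = \operatorname{Re}\int_{\rd} \Bigl(\prod_{k\neq j}(e^{i\langle x_k,\xi_k\rangle}-1)\psi_k(\xi_k)^{-H_k-\frac{q_k}{\alpha}}\Bigr) \\
&\qquad\qquad\times \bigl[(e^{i\langle x_j+h,\xi_j\rangle}-1) - (e^{i\langle h,\xi_j\rangle}-1)\bigr]\psi_j(\xi_j)^{-H_j-\frac{q_j}{\alpha}}\, W_\alpha(d\xi).
\end{align*}
The elementary identity $(e^{i\langle x_j+h,\xi_j\rangle}-1) - (e^{i\langle h,\xi_j\rangle}-1) = e^{i\langle h,\xi_j\rangle}(e^{i\langle x_j,\xi_j\rangle}-1)$ then collapses the bracket, so the integrand equals $e^{i\langle h,\xi_j\rangle}$ times $g(\xi):=\prod_{k=1}^m (e^{i\langle x_k,\xi_k\rangle}-1)\psi_k(\xi_k)^{-H_k-q_k/\alpha}$, which is precisely the integrand defining $X_\alpha(x_1,\ldots,x_m)$.

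The second step is to conclude equality in distribution by comparing characteristic functions. By the formula for a complex isotropic S$\alpha$S integral (cf.\ \cite[Chapter 6.3]{SamorodTaqq}), $\operatorname{Re}\int f\, dW_\alpha$ is a symmetric $\alpha$-stable real random variable with scale proportional to $\bigl(\int_{\rd}|f(\xi)|^\alpha\,d\xi\bigr)^{1/\alpha}$. Applying this to $f(\xi) = e^{i\langle h,\xi_j\rangle}g(\xi)$ and to $f(\xi)=g(\xi)$, the scales coincide because $|e^{i\langle h,\xi_j\rangle}|=1$. Since both random variables are symmetric $\alpha$-stable on $\mathbb{R}$ and have the same scale, they are equal in distribution.

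The main obstacle I expect is conceptual rather than computational: one has to use that isotropy of $W_\alpha$ ensures not just equality of the scale parameters of the underlying \emph{complex} stable integrals but also the equality of the laws of their real parts. This is exactly what the construction in \cite[Chapter 6.3]{SamorodTaqq} delivers. Integrability of the new integrand is automatic from Theorem \ref{existence}, since multiplication by a unimodular factor leaves the $L^\alpha$-norm unchanged, so no additional hypothesis on $h$ or on $\alpha$ is needed.
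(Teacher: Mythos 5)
Your proof is correct and follows essentially the same route as the paper's (the paper simply defers to \cite[Corollary 4.2.2]{Hoffm}, which argues via characteristic functions of stable integrals as in \cite[Corollary 4.2 (b)]{BMS}): linearity of the stable integral, the phase identity $e^{i\langle x_j+h,\xi_j\rangle}-e^{i\langle h,\xi_j\rangle}=e^{i\langle h,\xi_j\rangle}(e^{i\langle x_j,\xi_j\rangle}-1)$, and the fact that for a complex isotropic S$\alpha$S measure the law of $\operatorname{Re}\int f\,dW_\alpha$ depends only on $\int|f|^\alpha\,d\xi$, which is unchanged by a unimodular factor. One remark: what you actually prove is equality in distribution with $X_\alpha(x_1,\ldots,x_{j-1},x_j,x_{j+1},\ldots,x_m)$, whereas the display in the corollary has $h$ in the $j$-th slot on the right-hand side; yours is the intended statement (consistent with \cite[Corollary 4.2 (b)]{BMS} and with the subsequent deduction that $\tilde{X}_\alpha^{j,u}$ has stationary increments), so this is a typo in the statement rather than a gap in your argument. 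Note also that your phase argument only gives one-dimensional distributions, but that is all the corollary claims.
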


\begin{proof}
This result has been established in \cite[Corollary 4.2.2]{Hoffm} and is proven similarly to \cite[Corollary 4.2 (b)]{BMS}.
\end{proof}

As in \cite{BMS, BL}, Hoffmann \cite{Hoffm} is looking for global and directional H\"older exponents of the random fields defined in \eqref{harmonizable}. In this paper, we will derive global H\"older critical exponents. Following \cite[Definition 5]{BonEstr}, $\beta \in (0,1)$ is said to be the H\"older critical exponent of the random field $\{ X(x) : x \in \rd \}$, if there exists a modification $X^*$ of $X$ such that for any $s \in (0, \beta)$ the sample paths of $X^*$ satisfy almost surely a uniform H\"older condition of order $s$ on any compact set $I \subset \rd$, i.e. there exists a positive and finite random variable $Z$ such that almost surely
\begin{equation}\label{hoeldercondition}
| X^*(x) - X^*(y) | \leq Z \| x - y \| ^s \quad \text{for all } x,y \in I,
\end{equation}
whereas, for any $s \in ( \beta ,1)$, \eqref{hoeldercondition} almost surely fails.

In order to derive the condition \eqref{hoeldercondition}, Hoffmann \cite{Hoffm} proposes the following definition.

\begin{defi}\label{subdef}
Let the conditions of Theorem \ref{existence} hold and fix $u = (u_1, \ldots, u_m ) \in \mathbb{R}^{d_1} \times \ldots \times \mathbb{R}^{d_m} = \rd$. For all $j =1, \ldots, m$ define the random field $\{ \tilde{X}_\alpha^{j,u} (x) : x \in \mathbb{R}^{d_j} \}$ as
\begin{equation}\label{sub}
\tilde{X}_\alpha^{j,u} (x) = X_\alpha (u_1, \ldots, u_{j-1}, x, u_{j+1}, \ldots, u_m ) ,  x \in \mathbb{R}^{d_j}.
\end{equation}
\end{defi}

Note that the random fields $\tilde{X}_\alpha^{j,u}$ defined in \eqref{sub} depend on the vector $u = (u_1, \ldots, u_m ) \in \mathbb{R}^{d_1} \times \ldots \times \mathbb{R}^{d_m}$, and if $u_i = 0$ for at least one $i \in \{1, \ldots, m\}, i \neq j$, by \eqref{harmonizable}, one ends up with the trivial random field given by $\tilde{X}_\alpha^{j,u} (x)= 0$ for all $x \in \mathbb{R}^{d_j}$. Moreover, by Corollary \ref{operatorscaling}, $\tilde{X}_\alpha^{j,u}$ is operator scaling in the sense of \eqref{OSSRF} for the matrix $\tilde{E}_j$ and the exponent $H_j$, since for all $c>0$
\begin{align*}
& \{ \tilde{X}_\alpha^{j,u} ( c^{\tilde{E}_j} x ) : x \in \mathbb{R}^{d_j} \} \\
& \stackrel{\rm def.}{=} \{ X_\alpha \big( c^{E_j} (u_1, \ldots, u_{j-1}, x, u_{j+1}, \ldots, u_m) \big) : x \in \mathbb{R}^{d_j} \} \\
& \stackrel{\rm f.d.}{=} \{ c^{H_j} X_\alpha (u_1, \ldots, u_{j-1}, x, u_{j+1}, \ldots, u_m) : x \in \mathbb{R}^{d_j} \} \\
& \stackrel{\rm def.}{=} \{ c^{H_j} \tilde{X}_\alpha^{j,u} ( x ) : x \in \mathbb{R}^{d_j} \} .
\end{align*}
Further, by Corollary \ref{stationary}, one can show that $\tilde{X}_\alpha^{j,u}$ has stationary increments, i.e. for any $h \in \mathbb{R}^{d_j}$
\begin{align*}
\{ \tilde{X}_\alpha^{j,u} (x+h) - \tilde{X}_\alpha^{j,u} (h) : x \in \mathbb{R}^{d_j} \} \stackrel{\rm f.d.}{=}  \{ \tilde{X}_\alpha^{j,u} (x) : x \in \mathbb{R}^{d_j} \} .
\end{align*}
In addition, as a consequence of Theorem \ref{existence}, one also obtains that
$$ \tilde{\Gamma}_\alpha ^{j,u} (x) := \mathbb{E} \big[ \tilde{X}_\alpha^{j,u} (x) \big] < \infty \quad \text{for all } x \in \mathbb{R}^{d_j},$$
and, in particular, there exists $0<M< \infty$ such that $\tilde{\Gamma}_\alpha ^{j,u} (x) \leq M$ for all $x \in \mathbb{R}^{d_j}$ with $\tau_{\tilde{E}_j} (x) = 1$. Thus, the random fields defined in \eqref{sub} satisfy all the properties that are used in \cite{BMS, BL} in order to determine conditions of the form \eqref{hoeldercondition}. Using all this properties Hoffmann \cite{Hoffm} carried this out in detail and derived H\"older exponents for the random fields in \eqref{sub}. More precisely, he showed the following.

\begin{lemma}\label{exponentsub}
Assume that the conditions of Theorem \ref{existence} hold and let $\tilde{X}_\alpha ^{j,u} (x)$ be defined as in Definition \ref{subdef}. Then there exist a positive and finite random variable $Z_j$ and a continuous modification of $\tilde{X}_\alpha ^{j,u} (x)$ such that for any $s \in (0, \frac{H_j}{a_{p_j}^j} )$ the uniform H\"older condition \eqref{hoeldercondition} holds almost surely.
\end{lemma}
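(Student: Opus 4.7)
The plan is to verify that the random field $\tilde{X}_\alpha^{j,u}$ fits exactly into the framework used in \cite{BMS, BL} to study the Hölder regularity of harmonizable operator scaling stable random fields, and then to invoke the Hölder regularity theorem from those papers. The inputs are already collected in the discussion preceding the statement: by Corollary \ref{operatorscaling} the field $\tilde{X}_\alpha^{j,u}$ is operator scaling with exponent matrix $\tilde{E}_j$ and Hurst index $H_j$; by Corollary \ref{stationary} it has stationary increments; and the scale function $\tilde{\Gamma}_\alpha^{j,u}$ of the stable integral representing $\tilde{X}_\alpha^{j,u}$ is uniformly bounded on the $\tilde{E}_j$-unit sphere $\{z \in \mathbb{R}^{d_j} : \tau_{\tilde{E}_j}(z) = 1\}$.

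Combining operator scaling, stationary increments, and the polar decomposition \eqref{pc}, for any $x,y \in \mathbb{R}^{d_j}$ with $x \neq y$ one obtains
$$\tilde{X}_\alpha^{j,u}(x) - \tilde{X}_\alpha^{j,u}(y) \stackrel{\rm d}{=} \tilde{X}_\alpha^{j,u}(x-y) \stackrel{\rm d}{=} \tau_{\tilde{E}_j}(x-y)^{H_j}\, \tilde{X}_\alpha^{j,u}\bigl(l_{\tilde{E}_j}(x-y)\bigr),$$
and together with the uniform bound on $\tilde{\Gamma}_\alpha^{j,u}$ over the unit sphere this yields, for every $0 < \gamma < \alpha$,
$$\mathbb{E}\bigl[\bigl|\tilde{X}_\alpha^{j,u}(x) - \tilde{X}_\alpha^{j,u}(y)\bigr|^\gamma\bigr] \leq C(\gamma)\, \tau_{\tilde{E}_j}(x-y)^{\gamma H_j}.$$
Lemma \ref{pcbounds} then bounds $\tau_{\tilde{E}_j}(x-y)$ from above by $K\|x-y\|^{1/a_{p_j}^j - \varepsilon}$ whenever $\|x-y\|$ is small, translating the estimate into a purely Euclidean moment bound.

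The final step is to upgrade this moment estimate into the existence of a continuous modification together with a uniform Hölder condition of every order $s < H_j/a_{p_j}^j$ on an arbitrary compact set. This is where the main difficulty lies: a direct Kolmogorov continuity criterion, which requires $\mathbb{E}|X(x)-X(y)|^\gamma \leq C \|x-y\|^{d_j + \theta}$ and returns Hölder exponents below $\theta/\gamma$, only delivers Hölder exponents up to $H_j/a_{p_j}^j - d_j/\gamma$; in the stable range $\gamma$ must remain strictly below $\alpha$, so the deficit $d_j/\gamma$ cannot be made to vanish. To close this gap I would import the dyadic chaining argument developed in \cite{BMS, BL}, which exploits the power-law tail of stable integrals (rather than a single $\gamma$-th moment) and the $\tilde{E}_j$-homogeneity of $\tau_{\tilde{E}_j}$. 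Since all of the ingredients used in that chaining — operator scaling, stationary increments, and a uniformly bounded scale on the $\tilde{E}_j$-unit sphere — have just been verified for $\tilde{X}_\alpha^{j,u}$, the conclusion transfers to our setting and produces the uniform Hölder condition \eqref{hoeldercondition} for every $s \in (0, H_j/a_{p_j}^j)$, as claimed.
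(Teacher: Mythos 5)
Your proposal is correct and follows essentially the same route as the paper: the paper likewise does not reprove the chaining argument but verifies exactly the three structural properties you list (operator scaling for $\tilde{E}_j$ with exponent $H_j$, stationary increments, and a uniformly bounded scale parameter on the $\tau_{\tilde{E}_j}$-unit sphere) and then defers to the adaptation of the \cite{BMS, BL} machinery carried out in \cite{Hoffm}. Your additional remark on why a plain Kolmogorov criterion cannot reach the critical exponent in the stable range, forcing the tail-based dyadic chaining, is a correct and useful elaboration of what that cited machinery actually does.
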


As an easy consequence of Lemma \ref{exponentsub}, Hoffmann \cite{Hoffm} obtained the following result.

\begin{cor}\label{exponent}
Under the assumptions of Theorem \ref{existence}, there exist a positive and finite random variable $Z$ and a continuous modification of $X_\alpha$ such that for any $s \in (0, \min_{1 \leq j \leq m} \frac{H_j}{a_{p_j}^j} )$ the uniform H\"older condition \eqref{hoeldercondition} holds almost surely.
\end{cor}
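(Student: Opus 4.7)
The plan is to reduce the Hölder regularity of $X_\alpha$ to the regularity already available for the slice fields $\tilde{X}_\alpha^{j,u}$ via a coordinate-block telescoping argument. Fix a compact $I \subset \rd$. For arbitrary $x = (x_1,\ldots,x_m), y = (y_1,\ldots,y_m) \in I$, set $z^{(0)} = x$ and $z^{(j)} = (y_1,\ldots,y_j,x_{j+1},\ldots,x_m)$ for $j = 1,\ldots,m$, so that $z^{(m)} = y$ and
\begin{align*}
X_\alpha(y) - X_\alpha(x) \;=\; \sum_{j=1}^m \bigl[X_\alpha(z^{(j)}) - X_\alpha(z^{(j-1)})\bigr].
\end{align*}
The $j$-th summand only sees the block $x_j \mapsto y_j$ and equals $\tilde{X}_\alpha^{j,v^{(j)}}(y_j) - \tilde{X}_\alpha^{j,v^{(j)}}(x_j)$ for $v^{(j)} \in I$ with $v^{(j)}_i = y_i$ for $i<j$ and $v^{(j)}_i = x_i$ for $i>j$. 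This puts us formally in the setting of Lemma \ref{exponentsub}.

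The main technical point is that the Hölder constant in Lemma \ref{exponentsub} depends a priori on the vector $v^{(j)}$, which itself varies with $x,y$. To obtain a bound that is \emph{uniform} in $v^{(j)} \in I$, I would work directly with the harmonizable representation \eqref{harmonizable}: writing $|e^{i\langle x_j,\xi_j\rangle} - e^{i\langle y_j,\xi_j\rangle}| = |e^{i\langle y_j - x_j,\xi_j\rangle} - 1|$, using $|e^{i\langle v_i,\xi_i\rangle} - 1| \le 2$ for $i \neq j$, Fubini, the $\tilde{E}_i^T$-homogeneity of $\psi_i$, and the standard $L^p$--$L^\alpha$ moment bound for harmonizable $\alpha$-stable integrals, one should arrive for every $p \in (0,\alpha)$ at
\begin{align*}
\Exp\,|X_\alpha(z^{(j)}) - X_\alpha(z^{(j-1)})|^p \;\le\; C_{p,I}\,\tau_{\tilde{E}_j}(y_j - x_j)^{pH_j},
\end{align*}
with $C_{p,I}$ independent of $v^{(j)} \in I$.

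Combining this with Lemma \ref{pcbounds} to convert $\tau_{\tilde{E}_j}(y_j - x_j)$ into a bound in terms of $\|y_j - x_j\|^{1/a_{p_j}^j - \varepsilon}$ for small displacements, using $\|y_j - x_j\| \le \|y - x\|$, and summing over $j$, yields
\begin{align*}
\Exp\,|X_\alpha(y) - X_\alpha(x)|^p \;\le\; C\,\|y - x\|^{p\gamma}
\end{align*}
for every $\gamma < \min_{1 \le j \le m} H_j / a_{p_j}^j$. Feeding this moment estimate into the same chaining argument that \cite{BL} uses for harmonizable operator-scaling stable fields (and which powers Lemma \ref{exponentsub}) then produces a continuous modification $X^*$ of $X_\alpha$ satisfying \eqref{hoeldercondition} almost surely on $I$ for every $s < \min_j H_j/a_{p_j}^j$, with a single positive and finite random Hölder constant $Z$.

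The main obstacle is the uniformity flagged above: controlling the proportionality constant $C_{p,I}$ in the moment bound independently of the interpolating vectors $v^{(j)}$. Once this is in place, the telescoping is harmless, the passage through Lemma \ref{pcbounds} is mechanical, and the chaining step is exactly the one already carried out in the proof of Lemma \ref{exponentsub}.
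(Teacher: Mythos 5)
Your proof follows essentially the same route as the paper: the identical coordinate-block telescoping decomposition of $X_\alpha(x)-X_\alpha(y)$, with each summand identified as an increment of a slice field $\tilde{X}_\alpha^{j,v^{(j)}}$ and controlled via Lemma \ref{exponentsub}. The uniformity-in-$v^{(j)}$ issue you flag is real and is in fact glossed over in the paper's own very terse proof, so your extra care there is a refinement of, not a departure from, the paper's argument.
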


\begin{proof}
For $j=1, \ldots, m$, let $\tilde{X}_\alpha ^{j,u}$ be as in \eqref{sub} and $0<s< \min_{1 \leq j \leq m} \frac{H_j}{a_{p_j}^j}$. Then, by Lemma \ref{exponentsub}, there exist positive and finite random variables $Z_1, \ldots, Z_m$ such that \eqref{hoeldercondition} holds almost surely for any $j=1, \ldots, m$ and any continuous modification of $\tilde{X}_\alpha ^{j,u}$. Then the statement easily follows, by noting that the inequality
\begin{align*}
& | X_\alpha (x) - X_\alpha (y) | \\
& \leq \sum_{i=1}^m | X_\alpha (x_1, \ldots, x_{i-1}, x_i, y_{i+1}, \ldots, y_m) - X_\alpha (x_1, \ldots, x_{i-1}, y_i, y_{i+1}, \ldots, y_m) |
\end{align*}
holds for all $x= (x_1, \ldots, x_m)$ and $y= (y_1, \ldots, y_m) \in \mathbb{R}^{d_1} \times \ldots \times \mathbb{R}^{d_m}$ with the convention that
$$X_\alpha (x_1, \ldots, x_{i-1}, y_i, y_{i+1}, \ldots, y_m) = X_\alpha(y) $$
for $i=1$ and
$$X_\alpha (x_1, \ldots, x_{i-1}, x_i, y_{i+1}, \ldots, y_m) = X_\alpha(x) $$
for $i=m$.
\end{proof}

We remark that Corollary \ref{exponent} is not a statement about critical H\"older exponents. However, as a consequence of Theorem \ref{hausdorff} below, we will see that any continuous version of $X_\alpha$ admits $\min_{1 \leq j \leq m} \frac{H_j}{a_{p_j}^j}$ as the critical exponent.

\section{Hausdorff dimension}

We now state our main result on the Hausdorff and box-counting dimension of the graph of $X_\alpha$ defined in \eqref{harmonizable}.

\begin{theorem}\label{hausdorff}
Suppose that the conditions of Theorem \ref{existence} hold. Then, for any continuous version of $X_\alpha$, almost surely
\begin{equation}\label{dimensionresult}
	\dim_{\mathcal{H}} G_{X_\alpha} \big( [0,1]^d \big) = \dim_{\mathcal{B}} G_{X_\alpha} \big( [0,1]^d \big) = d+1 - \min_{1 \leq j \leq m} \frac{H_j}{a_{p_j}^j},
\end{equation}
where, as above,
$$ G_{X_\alpha} \big( [0,1]^d \big) = \Big\{ \big( x, X_\alpha (x) \big) : x \in [0,1]^d \Big\} $$
is the graph of $X_\alpha$ over $[0,1]^d$.
\end{theorem}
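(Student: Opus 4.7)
The strategy is to sandwich
$$\dim_{\mathcal{H}}\, G_{X_\alpha}\big([0,1]^d\big) \leq \underline{\dim_{\mathcal{B}}}\, G_{X_\alpha}\big([0,1]^d\big) \leq \overline{\dim_{\mathcal{B}}}\, G_{X_\alpha}\big([0,1]^d\big)$$
between matching upper and lower bounds. The upper half is essentially free from the preparatory work: for any $s_0 < \min_{1 \leq j \leq m} H_j/a_{p_j}^j$, Corollary \ref{exponent} produces a continuous modification of $X_\alpha$ which is uniformly H\"older of order $s_0$ on $[0,1]^d$, so Lemma \ref{upperbound} gives $\overline{\dim_{\mathcal{B}}}\, G_{X_\alpha}([0,1]^d) \leq d+1-s_0$ almost surely; letting $s_0$ tend upward completes the upper bound in \eqref{dimensionresult}. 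The substantive task is therefore the matching lower bound $\dim_{\mathcal{H}}\, G_{X_\alpha}([0,1]^d) \geq d+1-\min_j H_j/a_{p_j}^j$.

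For that, I fix $s < d+1-\min_j H_j/a_{p_j}^j$ and, via the occupation-measure reduction \eqref{frostman} and Frostman's theorem, reduce to proving
$$\int_{[0,1]^d \times [0,1]^d} \Exp\Big[\big(\|x-y\|^2 + |X_\alpha(x)-X_\alpha(y)|^2\big)^{-s/2}\Big]\, dx\,dy < \infty.$$
Let $\sigma(x,y)$ denote the scale parameter of the symmetric $\alpha$-stable random variable $X_\alpha(x)-X_\alpha(y)$; from the harmonizable representation it equals
$$\sigma(x,y)^\alpha = \int_{\rd} \Big|\prod_{j=1}^m (e^{i\langle x_j,\xi_j\rangle}-1) - \prod_{j=1}^m (e^{i\langle y_j,\xi_j\rangle}-1)\Big|^\alpha \prod_{j=1}^m \psi_j(\xi_j)^{-\alpha H_j - q_j}\, d\xi.$$
Since an S$\alpha$S density is bounded by $C/\sigma_Y$ with $|y|^{-\alpha-1}$ tails, standard estimates bound the integrand by a constant multiple of $(\|x-y\|+\sigma(x,y))^{-s}$ (with the analogous modification for $s \geq 1$), and the problem is reduced to the deterministic bound $\int\int (\|x-y\| + \sigma(x,y))^{-s}\,dx\,dy < \infty$.

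The key technical step is the blockwise lower bound
$$\sigma(x,y) \geq c_\varepsilon \cdot \max_{1 \leq j \leq m} \tau_{\tilde E_j}(x_j - y_j)^{H_j} \qquad \text{for all } x,y \in [\varepsilon,1]^d.$$
For fixed $j_0$ one isolates the $j_0$-th factor in the spectral integral by restricting the remaining $\xi_i$, $i \neq j_0$, to a region on which $\prod_{i \neq j_0}|e^{i\langle x_i,\xi_i\rangle}-1|\wedge|e^{i\langle y_i,\xi_i\rangle}-1|$ stays bounded away from zero, then applies the substitution $\xi_{j_0} \mapsto \tau_{\tilde E_{j_0}^T}(x_{j_0}-y_{j_0})^{-\tilde E_{j_0}^T}\xi_{j_0}$, using the $\tilde E_{j_0}^T$-homogeneity of $\psi_{j_0}$ together with $\trace(\tilde E_{j_0}) = q_{j_0}$ to extract the factor $\tau_{\tilde E_{j_0}}(x_{j_0}-y_{j_0})^{\alpha H_{j_0}}$, as in the single-block analysis of \cite{BMS,BL}. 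A thin neighbourhood of the coordinate hyperplanes, on which $X_\alpha \equiv 0$, has positive codimension and contributes only to a lower-dimensional set whose Hausdorff dimension is easily controlled separately.

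With the scale bound in hand, substituting $u = x-y$ gives
$$\int_{[0,1]^d \times [0,1]^d} (\|x-y\|+\sigma(x,y))^{-s}\, dx\,dy \leq C \sum_{j_0 = 1}^m \int_{[-1,1]^d} \big(\|u\| + \tau_{\tilde E_{j_0}}(u_{j_0})^{H_{j_0}}\big)^{-s}\, du,$$
which I evaluate by passing to polar coordinates $(\tau,\theta)$ with respect to $\tilde E_{j_0}$ in the $j_0$-th block (Jacobian $\tau^{q_{j_0}-1}$) and keeping Euclidean coordinates elsewhere. The standing hypothesis $H_{j_0} < a_1^{j_0}$ forces $\tau^{H_{j_0}}$ to dominate $\|\tau^{\tilde E_{j_0}}\theta\|$ as $\tau \downarrow 0$, and Lemma \ref{pcbounds} together with a direction-by-direction analysis on the unit $\tilde E_{j_0}$-sphere shows that each integral converges precisely when $s < d+1 - H_{j_0}/a_{p_{j_0}}^{j_0}$, which holds by our choice of $s$. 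The main obstacle is the blockwise scale lower bound itself: because $X_\alpha(x)-X_\alpha(y)$ is a single stable integral rather than a sum of independent blockwise pieces, scales do not add independently, and the directional stationarity of Corollary \ref{stationary} does not deliver a lower bound for free; extracting the sharp exponent $H_{j_0}/a_{p_{j_0}}^{j_0}$ from each block via the anisotropic spectral restriction and the polar change of variables, while carefully handling the coordinate axes where $X_\alpha$ vanishes, is the delicate part of the argument.
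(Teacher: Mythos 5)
Your overall architecture matches the paper's: the upper bound from Corollary \ref{exponent} combined with Lemma \ref{upperbound}, and the lower bound from the Frostman energy criterion \eqref{frostman} applied on a subcube bounded away from the coordinate hyperplanes, reduced via a density estimate to a deterministic integral controlled by a blockwise lower bound on the stable scale parameter $\sigma(x,y)$. The genuine gap is in the step you yourself flag as delicate, namely the bound $\sigma(x,y)\geq c\,\tau_{\tilde E_{j_0}}(x_{j_0}-y_{j_0})^{H_{j_0}}$. Your proposed mechanism --- restrict the variables $\xi_i$, $i\neq j_0$, to a region where the remaining factors are bounded away from zero and then substitute in $\xi_{j_0}$ --- does not work, because the spectral integrand is the modulus of a \emph{difference of two products},
$$\Big|\prod_{j=1}^m(e^{i\langle x_j,\xi_j\rangle}-1)-\prod_{j=1}^m(e^{i\langle y_j,\xi_j\rangle}-1)\Big|,$$
and pointwise lower bounds on the individual factors give no control over the cancellation between the two products: writing the difference as $P\,(e^{i\langle x_1,\xi_1\rangle}-e^{i\langle y_1,\xi_1\rangle})+(P-Q)(e^{i\langle y_1,\xi_1\rangle}-1)$ with $P,Q$ the products over $j\geq 2$, the second term is not small on any region of positive measure, and for $m\geq 2$ the integrand is not a function of $x-y$, so the substitution $\xi_{j_0}\mapsto r^{-\tilde E_{j_0}^T}\xi_{j_0}$ does not factor out $r^{\alpha H_{j_0}}$. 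This is precisely the obstruction caused by the loss of stationary increments. The paper overcomes it with a duality argument (Theorem \ref{sigmabound}, following Wu--Xiao): one pairs the spectral kernel against $\prod_j e^{-i\langle x_j,\lambda_j\rangle}\,\hat\delta_1(r^{\tilde E_1^T}\lambda_1)\prod_{j\geq 2}\hat\delta_j(\lambda_j)$ for bump functions $\delta_j$ supported in small $\tau_{\tilde E_j}$-balls, chosen so that $\delta_1^r(x_1)=\delta_1^r(x_1-y_1)=\delta_j(x_j)=0$; the pairing then evaluates exactly to $(2\pi)^d r^{-q_1}$, and H\"older's inequality yields $r^{-q_1}\leq C\,\sigma(x,y)\,r^{-H_1-q_1}$. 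Some such nondeterminism input is indispensable; your sketch names the difficulty but supplies no tool that resolves it.

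A secondary issue: in the relevant range $s>d\geq 1$ the density estimate must read $\Exp\big[(\|x-y\|^2+|X_\alpha(x)-X_\alpha(y)|^2)^{-s/2}\big]\leq C\,\|x-y\|^{1-s}\sigma(x,y)^{-1}$; the bound $(\|x-y\|+\sigma)^{-s}$ that you then integrate is false for $s>1$ (take $\|x-y\|\downarrow 0$ with $\sigma$ fixed). Consequently the deterministic integral to be controlled is $\int\int\|x-y\|^{1-s}\,\tau_{\tilde E_1}(x_1-y_1)^{-H_1}\,dx\,dy$, which dominates the one you estimate, and its convergence for $s<d+1-H_1/a^1_{p_1}$ is exactly the content of the paper's Proposition \ref{sigmawl}, proved via a dyadic decomposition in $\|x-y\|$ together with Lemma \ref{lemmaayache} and a polar-coordinate computation on the spectral subspaces of $\tilde E_1$. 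Your polar-coordinate argument would need to be redone for this larger integrand.
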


\noindent \textit{Proof.} Let us choose a continuous version of $X_\alpha$ by Corollary \ref{exponent}. From Corollary \ref{exponent}, for any $0 < s < \min_{1 \leq j \leq m} \frac{H_j}{a_{p_j}^j}$, the sample paths of $X_\alpha$ satisfy almost surely a uniform H\"older condition of order $s$ on $[0,1]^d$. Thus, by Lemma \ref{upperbound}, we have
\begin{align*}
\dim_{\mathcal{H}} G_{X_\alpha} \big( [0,1]^d \big) = \overline{\dim_{\mathcal{B}}} G_{X_\alpha} \big( [0,1]^d \big) \leq d+1 - s, \quad a.s.
\end{align*}
Letting $s \uparrow \min_{1 \leq j \leq m} \frac{H_j}{a_{p_j}^j}$ along rational numbers yields the upper bound in \eqref{hausdorff}. 

It remains to prove the lower bound in \eqref{hausdorff}. Since the inequality
$$\underline{\dim_{\mathcal{B}}}  B\geq \dim_{\mathcal{H}} B$$
holds for any $B \subset \rd$ (see \cite[Chapter 3.1]{Fal}), it suffices to show
$$ \dim_{\mathcal{H}} G_{X_\alpha} \big( [0,1]^d \big) \geq d+1 - \min_{1 \leq j \leq m} \frac{H_j}{a_{p_j}^j}, \quad a.s. $$
Further, note that, since $Q = [ \frac{1}{2} , 1]^d \subset [0,1]^d$, we have
$$ \dim_{\mathcal{H}} G_{X_\alpha} \big( [0,1]^d \big) \geq  \dim_{\mathcal{H}} G_{X_\alpha} (Q) $$
by monotonicity of the Hausdorff dimension. Thus, it is even enough to show that
\begin{equation}\label{boundq}
\dim_{\mathcal{H}} G_{X_\alpha} (Q)  \geq d+1 - \min_{1 \leq j \leq m} \frac{H_j}{a_{p_j}^j}, \quad a.s.
\end{equation}
We will show this by combining the methods used in \cite{Aych,BMS, BL}. From now on, without loss of generality, we will assume that
$$\min_{1 \leq j \leq m} \frac{H_j}{a_{p_j}^j} = \frac{H_1}{a_{p_1}^1} .$$

Let $\gamma > 1$. According to \eqref{frostman}, it suffices to show that
$$I_\gamma := \int_{ Q \times Q } \mathbb{E} \Big[ \big( \| x-y \|^2 + |X_\alpha(x) - X_\alpha(y)|^2 \big) ^{-\frac{\gamma}{2}} \Big] dx dy < \infty$$
in order to obtain $\dim_{\mathcal{H}} G_{X_\alpha} (Q)  \geq \gamma$ almost surely.

Using the characteristic function of the symmetric $\alpha$-stable random field $X_\alpha$, as in the proof of \cite[Proposition 5.7]{BL}, it can be shown that there is a constant $C_1 >0$ such that
\begin{equation}\label{constant1}
I_\gamma \leq C_1 \int_{Q \times Q} \| x-y \|^{1-\gamma} \sigma^{-1} (x,y) dx dy,
\end{equation}
where
$$ \sigma (x,y) = \mathbb{E} \big[ | X_\alpha (x) - X_\alpha (y) |^\alpha \big] ^{\frac{1}{\alpha}} = \| X_\alpha (x) - X_\alpha (y) \|_\alpha.$$
Using the notation $ x= (x_1, \ldots, x_m ) \in \mathbb{R}^{d_1} \times \ldots \times \mathbb{R}^{d_m}$ for any vector $x \in \rd$ define
\begin{equation}\label{wl}
W_l = \{ (x,y) \in Q \times Q : x_1 \neq y_1 , 2^{-l-1} \leq \| x-y \| \leq 2^{-l} \}
\end{equation}
for all $l \in \mathbb{N}_0$. Then, using \eqref{constant1} and \eqref{wl} we obtain
\begin{align*}
I_\gamma & \leq C_1 \sum_{l=0}^\infty \int_{W_l} \| x-y \| ^{1-\gamma} \sigma^{-1} (x,y) dx dy \\
& \leq \frac{C_1}{2} \sum_{l=0}^\infty \int_{W_l} 2 ^{-l (1-\gamma)} \sigma^{-1} (x,y) dx dy .
\end{align*}
Therefore, the proof of the lower bound in Theorem \ref{hausdorff} follows from the following Proposition, by letting $\varepsilon \downarrow 0$. \hfill $\Box$
\begin{prop}\label{sigmawl}
Let $\varepsilon > 0$ be small enough. Then, for all $\gamma \in (1, d+1 - \frac{H_1}{a_{p_1}^1} - \varepsilon )$, we have
$$ \sum_{l=0}^\infty 2 ^{l (\gamma-1)}  \int_{W_l} \sigma^{-1} (x,y) dx dy < \infty .$$
\end{prop}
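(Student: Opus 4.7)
The strategy is to first establish, for $(x,y) \in W_l$ with $l$ large, the key lower bound
$$\sigma(x,y) \geq c_0 \, \tau_{\tilde E_1}(x_1 - y_1)^{H_1},$$
then estimate the resulting integral $\int_{W_l} \tau_{\tilde E_1}(x_1-y_1)^{-H_1}\, dx\, dy$ using the spectral decomposition of $\tilde E_1$, and finally sum a geometric series.

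For the lower bound, I would start from the harmonizable representation, which gives
$$\sigma(x,y)^\alpha \asymp \int_{\mathbb{R}^d} \Big| \prod_{j=1}^m(e^{i\langle x_j,\xi_j\rangle}-1) - \prod_{j=1}^m(e^{i\langle y_j,\xi_j\rangle}-1) \Big|^\alpha \prod_{j=1}^m \psi_j(\xi_j)^{-\alpha H_j - q_j}\, d\xi,$$
set $c := \tau_{\tilde E_1}(x_1-y_1)$ and $l_1 := l_{\tilde E_1}(x_1-y_1)$, and apply the change of variables $\xi_1 = c^{-\tilde E_1^T}\eta_1$. By the $\tilde E_1^T$-homogeneity of $\psi_1$ one has $\psi_1(\xi_1)^{-\alpha H_1 - q_1}\, d\xi_1 = c^{\alpha H_1} \psi_1(\eta_1)^{-\alpha H_1 - q_1}\, d\eta_1$, and $\langle x_1-y_1, \xi_1\rangle$ becomes $\langle l_1, \eta_1\rangle$. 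This extracts the prefactor $c^{\alpha H_1}$, reducing the problem to showing the residual integral is uniformly bounded below for $(x,y)\in Q\times Q$ with $x_1\neq y_1$. I would establish this by restricting integration to a compact subregion on which $|\prod_j a_j - \prod_j b_j|$ is controlled from below in terms of the leading telescoping term $|a_1-b_1|\prod_{j\geq 2}|a_j|$, and then invoking the compactness of the $\tau_{\tilde E_1}$-unit sphere for $l_1$ together with the boundedness of $\psi_j$ and of $x_j,y_j$ on the compact $Q$; this is in the spirit of the lower bounds derived in \cite{BMS, BL}.

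With the lower bound in hand, substituting $z = y-x$ and using $\|z\|\leq 2^{-l}\Rightarrow \|z_1\|\leq 2^{-l},\ \|z_{2:m}\|\leq 2^{-l}$, we obtain
$$\int_{W_l}\sigma^{-1}(x,y)\, dx\, dy \leq C\,(2^{-l})^{d-d_1}\int_{\|z_1\|\leq 2^{-l}} \tau_{\tilde E_1}(z_1)^{-H_1}\, dz_1.$$
The remaining integral I would evaluate using the spectral decomposition $\mathbb{R}^{d_1} = V_1 \oplus \cdots \oplus V_{p_1}$ from Section 2.2 together with the sharpened version of Lemma \ref{pcbounds} from \cite[Proposition 3.3]{BL}, which yields $\tau_{\tilde E_1}(z_1) \geq c_\varepsilon \|z_1^{(p_1)}\|^{1/a_{p_1}^1 + \varepsilon}$ with $z_1^{(p_1)}$ the projection onto $V_{p_1}$. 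Since $H_1/a_{p_1}^1 < 1 \leq \dim V_{p_1}$, a standard spherical-coordinate computation on $V_{p_1}$ (using orthogonality of the subspaces) yields $\int_{\|z_1\|\leq 2^{-l}} \tau_{\tilde E_1}(z_1)^{-H_1}\, dz_1 \leq C(2^{-l})^{d_1 - H_1/a_{p_1}^1 - \varepsilon'}$. Combining everything,
$$\sum_{l=0}^\infty 2^{l(\gamma-1)} \int_{W_l}\sigma^{-1}(x,y)\, dx\, dy \leq C \sum_{l=0}^\infty 2^{l(\gamma - 1 - d + H_1/a_{p_1}^1 + \varepsilon')},$$
which converges under the hypothesis $\gamma < d+1 - H_1/a_{p_1}^1 - \varepsilon$ provided $\varepsilon'$ is chosen no larger than $\varepsilon$.

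The main obstacle is the lower bound in the first step when $y_{2:m} \neq x_{2:m}$: after the $\xi_1$-rescaling the residual integrand inherits a rapidly oscillating factor $e^{i\langle c^{-\tilde E_1}y_1, \eta_1\rangle}$ (because $c\downarrow 0$ forces the argument $c^{-\tilde E_1}y_1$ to blow up), and one must ensure this oscillation does not asymptotically cancel the contribution to $|\prod_j a_j - \prod_j b_j|^\alpha$. The remedy is to design the compact subregion so that the first telescoping term $(a_1-b_1)\prod_{j\geq 2}a_j$ strictly dominates the remainder $b_1(\prod_{j\geq 2}a_j - \prod_{j\geq 2}b_j)$ pointwise; this is exactly where the methods of \cite{Aych, BMS, BL} need to be combined.
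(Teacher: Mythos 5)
Your second step is fine: bounding $\int_{W_l}\tau_{\tilde E_1}(x_1-y_1)^{-H_1}\,dx\,dy$ by $C(2^{-l})^{d-H_1/a_{p_1}^1-\varepsilon'}$ using the spectral decomposition, keeping only the projection onto $V_{p_1}$ and the fact that $H_1/a_{p_1}^1<1\le\dim V_{p_1}$, and then summing the geometric series, is correct and in fact slightly leaner than the paper's two-variable polar-coordinate computation (Lemma \ref{lemmaayache} together with the change of variables $u=tv$ in the proof of Proposition \ref{sigmawl}). The gap is in your first step, the lower bound $\sigma(x,y)\ge c_0\,\tau_{\tilde E_1}(x_1-y_1)^{H_1}$, which is the actual crux of the proposition and which the paper isolates as Theorem \ref{sigmabound}. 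You correctly identify the obstacle created by the absence of stationary increments, but the remedy you propose does not close it.

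Concretely: you need the lower bound on all of $Q\times Q$ with $x_1\neq y_1$, not just on $W_l$ for large $l$, because for each fixed small $l$ the set $W_l$ still contains pairs with $\tau_{\tilde E_1}(x_1-y_1)=:r$ arbitrarily small (only $\|x-y\|\ge 2^{-l-1}$ and $x_1\neq y_1$ are imposed), so even the finitely many leading terms $\int_{W_l}\sigma^{-1}$ are not finite without it. In that regime $\|x_j-y_j\|$ for $j\ge 2$ can be of order one, and after the rescaling $\xi_1=r^{-\tilde E_1^T}\eta_1$ the remainder term $b_1(\prod_{j\ge2}a_j-\prod_{j\ge2}b_j)$ is comparable in size to the leading term $(a_1-b_1)\prod_{j\ge2}a_j$: one has $|a_1-b_1|=|e^{i\langle l_{\tilde E_1}(x_1-y_1),\eta_1\rangle}-1|\le 2$ while $|b_1|$ oscillates rapidly up to $2$ and $|\prod_{j\ge2}a_j-\prod_{j\ge2}b_j|$ need not be small relative to $\prod_{j\ge2}|a_j|$. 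Hence no fixed compact $\eta$-region of positive measure yields pointwise domination; any repair of your route must exploit the oscillation in an averaged sense (e.g.\ a bound of the type $\inf_{w\in\mathbb{C}}\int_K|e^{i\langle l_1,\eta_1\rangle}-w|^\alpha d\eta_1>0$ uniformly over the compact set $\tau_{\tilde E_1}(l_1)=1$, plus a separate compactness argument when $r$ is bounded below), which is a substantively different and harder argument than the one you sketch. The paper avoids the cancellation analysis entirely by a duality argument in the style of Wu and Xiao: it constructs bump functions $\delta_1^r$ (supported in a $\tau_{\tilde E_1}$-ball of radius proportional to $r$ and vanishing at $x_1$ and at $x_1-y_1$) and $\delta_j$ (vanishing at $x_j$, $j\ge2$, which is where $Q=[\tfrac12,1]^d$ being bounded away from the coordinate hyperplanes is used), evaluates the pairing of the Fourier transforms against the spectral kernel exactly as $(2\pi)^d r^{-q_1}$, and then applies H\"older's inequality to obtain $r^{-q_1}\le\tilde C\,\sigma(x,y)\,r^{-H_1-q_1}$. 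You should either adopt that mechanism or supply the averaged lower bound; as written, the key estimate is asserted rather than proved.
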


We will need several results in order to prove Proposition \ref{sigmawl}. The following Lemma (and its proof) can be seen as a generalization of \cite[Lemma 3.5]{Aych}.
\begin{lemma}\label{lemmaayache}
For any $x \in \rd$, let $x= (x_1, \ldots, x_m) \in \mathbb{R}^{d_1} \times \ldots \times \mathbb{R}^{d_m}$. Then there exists a constant $C_3 > 0$ such that for all $\beta \in (0,1)$ and for all $l \in \mathbb{N}_0$ we have
$$ \int_{W_l} \| x_1-y_1 \| ^{-\beta} dx_1 \ldots dx_m dy_1 \ldots dy_m \leq C_3 \cdot 2^{-l (d-\beta)} .$$

\end{lemma}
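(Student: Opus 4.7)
The plan is to decouple the integral over $W_l$ into a single ball in $\rd$ via translation, and then use the product structure $\rd = \mathbb{R}^{d_1} \times \cdots \times \mathbb{R}^{d_m}$ to split it into factors, each of which can be evaluated by a simple scaling argument.

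First I would fix $x \in Q$ and substitute $u = y - x$. The conditions defining $W_l$ become $x + u \in Q$, $u_1 \neq 0$, and $2^{-l-1} \leq \|u\| \leq 2^{-l}$. Since we only want an upper bound, I would drop both the constraint $x + u \in Q$ and the lower bound $\|u\| \geq 2^{-l-1}$, and then integrate $x$ out over $Q$ to obtain
\begin{equation*}
\int_{W_l} \|x_1 - y_1\|^{-\beta}\, dx\, dy \leq |Q|\int_{\|u\| \leq 2^{-l}} \|u_1\|^{-\beta}\, du.
\end{equation*}

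Next I would use the identity $\|u\|^2 = \sum_{j=1}^m \|u_j\|^2$, which forces $\|u_j\| \leq 2^{-l}$ for every $j$ whenever $\|u\| \leq 2^{-l}$. Hence the Euclidean ball $\{\|u\|\leq 2^{-l}\}$ is contained in the product of blockwise balls $\prod_{j=1}^m \{u_j \in \mathbb{R}^{d_j} : \|u_j\| \leq 2^{-l}\}$, and since the integrand depends only on $u_1$, Fubini gives
\begin{equation*}
\int_{\|u\|\leq 2^{-l}} \|u_1\|^{-\beta}\, du \leq \Bigl(\int_{\|u_1\|\leq 2^{-l}} \|u_1\|^{-\beta}\, du_1\Bigr) \prod_{j=2}^m \int_{\|u_j\|\leq 2^{-l}} du_j.
\end{equation*}
For each $j \geq 2$ the factor equals a dimensional constant times $2^{-l d_j}$, being the volume of a ball in $\mathbb{R}^{d_j}$. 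For $j=1$, switching to polar coordinates on $\mathbb{R}^{d_1}$ yields
\begin{equation*}
\int_{\|u_1\|\leq 2^{-l}} \|u_1\|^{-\beta}\, du_1 = \omega_{d_1}\int_0^{2^{-l}} r^{d_1 - 1 - \beta}\, dr = \frac{\omega_{d_1}}{d_1 - \beta}\, 2^{-l(d_1 - \beta)},
\end{equation*}
where $\omega_{d_1}$ denotes the surface area of the unit sphere in $\mathbb{R}^{d_1}$; this is finite because $\beta \in (0,1)$ and $d_1 \geq 1$ imply $d_1 - \beta > 0$. Multiplying all factors reproduces the exponent $-l(d-\beta)$, with $d = \sum_j d_j$, which is the required bound.

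The argument presents no serious obstacle: it amounts essentially to dimensional analysis. The only conceptual point is that the isotropic constraint $\|u\| \leq 2^{-l}$ uniformly controls each block $\|u_j\|$, allowing the integral to separate into a product. The constant $C_3$ obtained this way depends on $\beta$ through the factor $(d_1 - \beta)^{-1}$, but this is harmless for the application in Proposition \ref{sigmawl}, where $\beta$ ranges in a subset of $(0,1)$ bounded away from $1$.
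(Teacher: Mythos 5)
Your proof is correct and follows essentially the same route as the paper: both arguments exploit that the constraint $\|x-y\|\leq 2^{-l}$ controls each block $\|x_j-y_j\|$ separately, so the integral factors into a singular integral over the first block (of order $2^{-l(d_1-\beta)}$) times volumes of order $2^{-l d_j}$ for $j\geq 2$; your translation $u=y-x$ plus polar coordinates is just a cosmetic variant of the paper's indicator-product and coordinate-wise Fubini argument. Your closing remark that the constant degenerates as $\beta\uparrow 1$ is apt — the paper's own constant has the same $(1-\beta)^{-1}$-type dependence, and in the application $\beta$ is a single fixed value below $1$, so nothing is lost.
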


\begin{proof}
Define
$$G_l^j = \{ (s,t) \in [0,1]^{d_j} \times [0,1]^{d_j} : \| s-t\| \leq 2^{-l} \}$$
for all $l \in \mathbb{N}_0$ and $j =1, \ldots, m$. Then it follows from \eqref{wl} that for any $(x,y) \in \mathbb{R}^d \times \rd$, $x= (x_1, \ldots x_m), y =(y_1, \ldots, y_m) \in \mathbb{R}^{d_1} \times \ldots \times \mathbb{R}^{d_m}$, we have
$$\mathbbm{1}_{W_l} (x,y) \leq \prod_{j=1}^m \mathbbm{1}_{G_l^j} (x_j, y_j) .$$
From this, we obtain
\begin{align}\label{proof1}
& \int_{W_l} \| x_1-y_1 \| ^{-\beta} dx_1 \ldots dx_m dy_1 \ldots dy_m \\
& \leq \int_{G^1_l} \| x_1-y_1 \| ^{-\beta} dx_1 dy_1 \cdot \prod_{j=2}^m \int_{G_l^j} dx_j dy_j \nonumber.
\end{align}
By applying Fubini's Theorem and the inequaliy
$$ \| u - v \| ^{-\beta } \leq | u_1 - v_1 | ^{-\beta} $$
for any vectors $u = (u_1, \ldots, u_{d_1} )$ and $v = (v_1, \ldots, v_{d_1} ) \in \mathbb{R}^{d_1}$, it is easy to see that there are constants $c, c' >0$ such that
\begin{equation}\label{proof2}
\int_{G^1_l} \| x_1-y_1 \| ^{-\beta} dx_1 dy_1 \leq c \cdot(2^{-l} )^{d_1 - \beta} ,
\end{equation}
and
\begin{equation}\label{proof3}
\int_{G^j_l} dx_j dy_j \leq c' \cdot(2^{-l} )^{d_j} 
\end{equation}
for all $j =2, \ldots, m$. Combining \eqref{proof1}, \eqref{proof2} and \eqref{proof3} yields the statement of the Lemma.
\end{proof}

The following Theorem is crucial for proving Proposition \ref{sigmawl} and its proof is based on \cite[Theorem 1]{WuXiao}. See also \cite{Xiao1, Xiao2, Xiao3}.

\begin{theorem}\label{sigmabound}
There exists a constant $C_4 >0$, depending on $H_1, \ldots, H_m, q_1, \ldots, q_m$ and $d$ only, such that for all $x = (x_1, \ldots x_m), y =(y_1, \ldots y_m) \in [\frac{1}{2} , 1)^{d_1} \times \ldots \times [\frac{1}{2} , 1)^{d_m}$ we have
$$ \sigma (x,y) \geq C_4 \cdot \tau_{\tilde{E}_1} (x_1 - y_1)^{H_1} ,$$
where $\tau_{\tilde{E}_1} ( \cdot )$ is the radial part with respect to $\tilde{E}_1$.
\end{theorem}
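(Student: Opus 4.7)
My plan is to adapt the approach used in \cite{BMS, BL} for operator scaling random fields to the sheet setting, by extracting the factor $\tau^{\alpha H_1}$ (with $\tau = \tau_{\tilde{E}_1}(x_1 - y_1)$) through a change of variables in the spectral variable $\xi_1$, and then showing that the remaining integral is bounded below by a positive constant via a careful restriction to a compact subdomain together with a compactness argument on the $\tilde{E}_1$-unit sphere.

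First, using the scale parameter formula for stable integrals (cf.\ \cite[Chapter 6.3]{SamorodTaqq}), I would write
\begin{equation*}
\sigma(x,y)^\alpha = c_\alpha \int_{\rd} \bigl| N(x,\xi) - N(y,\xi)\bigr|^\alpha \prod_{j=1}^m \psi_j(\xi_j)^{-\alpha H_j - q_j}\, d\xi,
\end{equation*}
where $N(x,\xi) = \prod_{j=1}^m(e^{i\langle x_j, \xi_j\rangle} - 1)$, and then perform the substitution $\xi_1 = \tau^{-\tilde{E}_1^T}\eta_1$, keeping $\xi_2, \ldots, \xi_m$ fixed. The $\tilde{E}_1^T$-homogeneity of $\psi_1$ gives $\psi_1(\tau^{-\tilde{E}_1^T}\eta_1) = \tau^{-1}\psi_1(\eta_1)$, so the spectral factor $\psi_1(\xi_1)^{-\alpha H_1 - q_1}$ contributes $\tau^{\alpha H_1 + q_1}$, while the Jacobian is $\tau^{-q_1}$ (since $q_1 = \trace(\tilde{E}_1)$); together these pull $\tau^{\alpha H_1}$ out of the integral. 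Writing the result as $\sigma(x,y)^\alpha = c_\alpha \tau^{\alpha H_1} J(x,y)$, the scaled first-block arguments become $\tau^{-\tilde{E}_1} x_1$ and $\tau^{-\tilde{E}_1} y_1$, whose difference is $l = l_{\tilde{E}_1}(x_1 - y_1)$, a point on the compact $\tilde{E}_1$-unit sphere $\{\tau_{\tilde{E}_1}(\cdot) = 1\}$.

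The main task is to show $J(x,y) \geq C > 0$ uniformly for $x, y \in [\frac{1}{2}, 1)^{d_1} \times \cdots \times [\frac{1}{2}, 1)^{d_m}$. To this end I would use the algebraic identity
\begin{align*}
& \bigl(e^{i\langle\tau^{-\tilde{E}_1}x_1, \eta_1\rangle} - 1\bigr)\prod_{j\geq 2}(e^{i\langle x_j, \xi_j\rangle} - 1) - \bigl(e^{i\langle\tau^{-\tilde{E}_1}y_1, \eta_1\rangle} - 1\bigr)\prod_{j\geq 2}(e^{i\langle y_j, \xi_j\rangle} - 1) \\
& \quad = e^{i\langle\tau^{-\tilde{E}_1}y_1, \eta_1\rangle}\bigl(e^{i\langle l, \eta_1\rangle} - 1\bigr)\prod_{j\geq 2}(e^{i\langle x_j, \xi_j\rangle} - 1) + \bigl(e^{i\langle\tau^{-\tilde{E}_1}y_1, \eta_1\rangle} - 1\bigr)R,
\end{align*}
where $R = \prod_{j\geq 2}(e^{i\langle x_j, \xi_j\rangle} - 1) - \prod_{j\geq 2}(e^{i\langle y_j, \xi_j\rangle} - 1)$. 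The modulus of the first summand is $|e^{i\langle l, \eta_1\rangle} - 1|\prod_{j\geq 2}|e^{i\langle x_j, \xi_j\rangle} - 1|$ and depends on the potentially large vector $\tau^{-\tilde{E}_1}y_1$ only through a unit-modulus prefactor. I would then restrict the integration to a compact subdomain $A_1 \times A_2 \times \cdots \times A_m$ chosen so that $|e^{i\langle l, \eta_1\rangle} - 1|$ is bounded below for all $\eta_1 \in A_1$ and all $l$ on the compact $\tilde{E}_1$-unit sphere, $|e^{i\langle x_j, \xi_j\rangle} - 1|$ is bounded below on $A_j$ for all $x_j \in [\frac{1}{2}, 1]^{d_j}$, and, crucially, the residual $R$ is dominated by the first summand (by taking $A_j$ for $j \geq 2$ sufficiently concentrated near $\{\xi_j : \langle x_j - y_j, \xi_j\rangle \approx 0\}$). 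On this subdomain the pointwise bound $|A + B| \geq |A|/2$ (valid when $|B| \leq |A|/2$) yields $J(x,y) \geq C$.

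The main obstacle is the last requirement: the residual $R$ can be of order one when the differences $x_j - y_j$, $j \geq 2$, are comparable to the diameter of the cube, so the sets $A_j$ must be designed carefully to neutralize this interference. The construction is possible because $l$ ranges over the compact $\tilde{E}_1$-unit sphere, each pair $x_j, y_j$ lies in the compact cube $[\frac{1}{2}, 1]^{d_j}$, the integrand depends continuously on these parameters, and the density $\psi_j(\xi_j)^{-\alpha H_j - q_j}$ is bounded and bounded below on compact sets away from the origin; these facts combine to yield a single uniform positive constant, following the blueprint of \cite[Theorem 1]{WuXiao} for fractional Brownian sheets.
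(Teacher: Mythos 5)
Your opening reduction is fine: substituting $\xi_1=\tau^{-\tilde E_1^T}\eta_1$ with $\tau=\tau_{\tilde E_1}(x_1-y_1)$ does extract $\tau^{\alpha H_1}$ (the homogeneity of $\psi_1$ contributes $\tau^{\alpha H_1+q_1}$ and the Jacobian $\tau^{-q_1}$), and your algebraic identity is correct. The gap is in the heart of the argument: the claim that on a suitable subdomain the residual $(e^{i\langle\tau^{-\tilde E_1}y_1,\eta_1\rangle}-1)R$ is pointwise dominated by half the main term $|e^{i\langle l,\eta_1\rangle}-1|\,|P_x|$, where $P_x=\prod_{j\ge2}(e^{i\langle x_j,\xi_j\rangle}-1)$. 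This fails already in the simplest case $m=2$, $d_1=d_2=1$. There $|R|=|e^{i(x_2-y_2)\xi_2}-1|$ is small only when $(x_2-y_2)\xi_2$ is near $2\pi\ganz$; near $\xi_2=0$ one has $|R|\approx|x_2-y_2|\,|\xi_2|$ while $|P_x|=|e^{ix_2\xi_2}-1|\approx x_2|\xi_2|$, so $|R|/|P_x|\approx(x_2-y_2)/x_2$, and as $x_2\to1$, $y_2\to\tfrac12$ the required inequality $2|R|\le\tfrac12|e^{i\langle l,\eta_1\rangle}-1|\,|P_x|$ forces $|e^{i\langle l,\eta_1\rangle}-1|\to2$, i.e.\ $\langle l,\eta_1\rangle\to\pi$ modulo $2\pi$; the admissible region shrinks to a null set and the constant degenerates. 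The other zeros $\xi_2=2\pi k/(x_2-y_2)$ of $R$ behave the same way whenever $x_2/(x_2-y_2)$ is near an integer, since then $x_2\xi_2$ is near $2\pi\ganz$ and $P_x$ vanishes at the same rate as $R$. Two further uniformity issues are left open: your sets $A_j$ depend on $x_j-y_j$, so a single constant for the whole cube does not follow without a compactness argument that you do not supply (and which is delicate, since the construction is not continuous in $(x,y)$); and for $d_1\ge2$ no set $A_1$ of positive measure admits a pointwise lower bound on $|e^{i\langle l,\eta_1\rangle}-1|$ uniform over all $l$ on the $\tilde E_1$-unit sphere, because for every $\eta_1$ some $l$ on the sphere is orthogonal to it (only the integrated bound $\inf_l\int_{A_1}|e^{i\langle l,\eta_1\rangle}-1|^\alpha\,d\eta_1>0$ holds, and that does not combine with a pointwise domination of $R$).

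The paper circumvents this interference problem entirely with the duality argument of Wu and Xiao. One pairs the spectral increment with $\prod_{j}e^{-i\langle x_j,\lambda_j\rangle}\hat\delta_1(r^{\tilde E_1^T}\lambda_1)\prod_{j\ge2}\hat\delta_j(\lambda_j)$, where the $\delta_j$ are smooth bumps supported in small $\tau_{\tilde E_j}$-balls about the origin. Because $x_j$ is bounded away from $0$ on $[\tfrac12,1)^{d_j}$ and $\tau_{\tilde E_1}\big((1/r)^{\tilde E_1}(x_1-y_1)\big)=1$, all terms $\delta_j(x_j)$, $\delta_1^r(x_1)$, $\delta_1^r(x_1-y_1)$ vanish and Fourier inversion evaluates the pairing exactly as $(2\pi)^d r^{-q_1}$; H\"older's inequality bounds the same pairing above by $\tilde C\,\sigma(x,y)\,r^{-H_1-q_1}$, which gives the theorem (with an extra truncation step when $\alpha<1$). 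If you wish to salvage a direct approach you would need to exploit cancellation between your two summands rather than the triangle inequality, which is in effect what the duality argument accomplishes.
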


\begin{proof}
Throughout this proof, we fix $x = (x_1, \ldots, x_m)$, $y =(y_1, \ldots y_m) \in [\frac{1}{2} , 1)^{d_1} \times \ldots \times [\frac{1}{2} , 1)^{d_m}$. We will show that
\begin{equation}\label{sigmatau}
\sigma (x,y) \geq C r^{H_1}
\end{equation}
for some constant $C>0$ independent of $x$ and $y$ and $r = \tau_{\tilde{E}_1} (x_1 - y_1)$. Without loss of generality we will assume that $r>0$, since for $r=0$ \eqref{sigmatau} always holds. By definition, we have
\begin{align}\label{sigmaalpha}
\sigma^\alpha (x,y) & = \mathbb{E} \big[  | X_\alpha (x) - X_\alpha (y) | ^\alpha \big] \nonumber \\
& = \int_\rd | \prod_{j=1}^m ( e^{i \langle x_j, \xi_j \rangle} - 1) - \prod_{j=1}^m ( e^{i \langle y_j, \xi_j \rangle} - 1) |^\alpha \prod_{j=1}^m | \psi_j (\xi_j) | ^{-\alpha H_j - q_j} d\xi .
\end{align}
Now, for every $j=1, \ldots, m$ we consider a so-called bump function $\delta_j \in C^{\infty} (\mathbb{R}^{d_j} )$ with values in $[0,1]$ such that $\delta_j (0) = 1$ and $\delta_j$ vanishes outside the open ball
$$B (K_j, 0) = \{ z \in \mathbb{R}^{d_j} : \tau_{\tilde{E}_j} (z) < K_j \} $$
for
\begin{align*}
K_j = \min \Big\{ 1, \frac{K_1^j}{K_2^j} ( \sqrt{d_1} \frac{1}{2}) ^{\frac{1}{a_1^j} - \frac{1}{a_{p_j}^j} + 2 \varepsilon} & , \frac{K_3^j}{K_4^j} ( \sqrt{d_1} \frac{1}{2}) ^{\frac{1}{a_{p_j}^j} - \frac{1}{a_{1}^j} - 2 \varepsilon} , \frac{K_1^j}{K_4^j}, \frac{K_3^j}{K_2^j}, \\
& K_1^j (  \sqrt{d_1} \frac{1}{2}) ^{\frac{1}{a_1^j} + \varepsilon} ,  K_3^j (  \sqrt{d_1} \frac{1}{2}) ^{\frac{1}{a_{p_j}^j} - \varepsilon} \Big\} ,
\end{align*}
where $\varepsilon >0$ is some (sufficiently) small number and $K_1^j , \ldots, K_4^j$ are the suitable constants derived from Lemma \ref{pcbounds} corresponding to the matrix $\tilde{E}_j$. The choice of the constant $K_j >0$ will be clear later in this proof. Let $\hat{\delta}_j$ be the Fourier transform of $\delta_j$. It can be verified that $\hat{\delta}_j \in C^\infty ( \mathbb{R}^{d_j} )$ as well and that $\hat{\delta}_j (\lambda_j)$ decays rapidly as $\| \lambda_j \| \to \infty$. By the Fourier inversion formula, we have
\begin{equation}\label{fourier1}
\delta_j (s_j) = \frac{1}{(2 \pi ) ^{d_j}} \int_{\mathbb{R}^{d_j} } e^{-i \langle s_j, \lambda_j \rangle} \hat{\delta}_j (\lambda_j) d\lambda_j
\end{equation}
for all $s_j \in \mathbb{R}^{d_j}$. Let $\delta_1^r (s_1) = \frac{1}{r^{q_1}} \delta_1 \big( (\frac{1}{r} )^{\tilde{E}_1} s_1 \big) .$ Then, by a change of variables in \eqref{fourier1}, for all $s_1 \in \mathbb{R}^{d_1}$ we obtain
\begin{equation}\label{fourier2}
\delta_1^r (s_1) = \frac{1}{(2 \pi ) ^{d_1}} \int_{\mathbb{R}^{d_1} } e^{-i \langle s_1, \lambda_1 \rangle} \hat{\delta}_1 (r^{\tilde{E}_1^T}\lambda_1) d\lambda_1 .
\end{equation}
Using Lemma \ref{pcbounds} and the fact that $\tau_{\tilde{E}_1} ( \cdot )$ is $\tilde{E}_1$-homogeneous, it is straightforward to see that $\tau_{\tilde{E}_j} (x_j) \geq K_j$, $\tau_{\tilde{E}_1} \big( (\frac{1}{r})^{\tilde{E}_1}(x_1 - y_1) \big) \geq K_1$ and $\tau_{\tilde{E}_1} \big( (\frac{1}{r})^{\tilde{E}_1}x_1 \big) \geq K_1$. Therefore, we have $\delta_1^r (x_1) = 0$, $\delta_1^r (x_1 - y_1) = 0$ and $\delta_j (x_j) = 0$ for all $j =2, \ldots, m$. Hence, combining this with \eqref{fourier1} and \eqref{fourier2} it follows that
\begin{align}\label{cauchy1}
I & : = \int_\rd \Big( \prod_{j=1}^m ( e^{i \langle x_j, \lambda_j \rangle} - 1) - \prod_{j=1}^m ( e^{i \langle y_j, \lambda_j \rangle} - 1) \Big) \nonumber \\
& \quad \cdot \prod_{j=1}^m  e^{- i \langle x_j, \lambda_j \rangle} \hat{\delta}_1 (r^{\tilde{E}_1^T} \lambda_1 ) \prod_{j=2}^m \hat{\delta}_j (\lambda_j) d \lambda \nonumber \\
& = (2 \pi )^d \Big( \delta_1^r (0) - \delta_1^r (x_1) \Big) \prod_{j=2}^m \Big( \delta_j(0) - \delta_j (x_j) \Big) \nonumber \\
& \quad - (2 \pi )^d \Big( \delta_1^r (x_1 - y_1) - \delta_1^r (x_1) \Big) \prod_{j=2}^m \Big( \delta_j(x_j-y_j) - \delta_j (x_j) \Big) \nonumber \\
& = (2 \pi)^d \frac{1}{r^{q_1}} .
\end{align}
Now, we first assume that $\alpha \geq 1$ and let $\beta>1$ be the constant such that $\frac{1}{\alpha} + \frac{1}{\beta} = 1$. By H\"older's inequality and \eqref{sigmaalpha}, we have
\begin{align}\label{cauchy2}
I & \leq \Big(  \int_\rd | \prod_{j=1}^m ( e^{i \langle x_j, \lambda_j \rangle} - 1) - \prod_{j=1}^m ( e^{i \langle y_j, \lambda_j \rangle} - 1) |^\alpha \prod_{j=1}^m | \psi_j (\lambda_j) | ^{-\alpha H_j - q_j} d\lambda \Big) ^{\frac{1}{\alpha}} \nonumber \\
& \quad \cdot \Big( \int_\rd \frac{1}{\big( \prod_{j=1}^m | \psi_j (\lambda_j) | ^{-\alpha H_j - q_j} \big)^{\frac{\beta}{\alpha}} } | \hat{\delta}_1  (r^{\tilde{E}_1^T} \lambda_1 ) \prod_{j=2}^m \hat{\delta}_j (\lambda_j) |^\beta d\lambda \Big) ^{\frac{1}{\beta}} \nonumber \\
& = \sigma (x,y) \cdot r^{-H_1 - \frac{q_1}{\alpha}- \frac{q_1}{\beta}} \cdot \Big( \int_\rd \frac{1}{\big( \prod_{j=1}^m | \psi_j (\lambda_j) | ^{-H_j - \frac{q_j}{\alpha}} \big)^{\beta} } \prod_{j=1}^m | \hat{\delta}_j (\lambda_j) |^\beta d\lambda \Big) ^{\frac{1}{\beta}} \nonumber \\
& = \tilde{C} \cdot \sigma (x,y) \cdot r^{-H_1 - q_1},
\end{align}
where $\tilde{C} >0$ is a constant, which only depends on $H_1, \ldots, H_m, q_1, \ldots, q_m, d$ and $\delta$. It is clear that \eqref{sigmatau} follows from \eqref{cauchy1} and \eqref{cauchy2}. If $\alpha \in (0,1)$, choose $k \in \mathbb{N}$ such that $k \alpha \geq 1$ and let $\beta' >1$ be the constant such that $\frac{1}{k\alpha} + \frac{1}{\beta'} = 1$. We first show that
\begin{align}\label{sigmak}
\Big(  \int_\rd | \prod_{j=1}^m ( e^{i \langle x_j, \lambda_j \rangle} - 1) & - \prod_{j=1}^m ( e^{i \langle y_j, \lambda_j \rangle} - 1) |^{k\alpha} \prod_{j=1}^m | \psi_j (\lambda_j) | ^{-\alpha H_j - q_j} d\lambda \Big) ^{\frac{1}{k\alpha}} \nonumber \\
 & \leq 2^{k \alpha (m+1)} \sigma(x,y)^{\frac{1}{k}} .
\end{align}
For $\lambda = (\lambda_1, \ldots, \lambda_m) \in \mathbb{R}^{d_1} \times \ldots \times \mathbb{R}^{d_m}$, let 
$$z (\lambda) = \prod_{j=1}^m ( e^{i \langle x_j, \lambda_j \rangle} - 1)  - \prod_{j=1}^m ( e^{i \langle y_j, \lambda_j \rangle} - 1)$$
and note that, since $| e^{it} -1 |^2 = 2 - 2 \cos t \leq 4$ for all $t \in \mathbb{R}$, it follows that
$$ |z (\lambda)| \leq \prod_{j=1}^m | e^{i \langle x_j, \lambda_j \rangle} - 1|  +  \prod_{j=1}^m | e^{i \langle y_j, \lambda_j \rangle} - 1| \leq 2^{m+1}.$$ From this, we obtain
\begin{align*}
& \Big(  \int_\rd | z (\lambda) |^{k\alpha} \prod_{j=1}^m | \psi_j (\lambda_j) | ^{-\alpha H_j - q_j} d\lambda \Big) ^{\frac{1}{k\alpha}} \\
& =   \Big(  \int_{\{\lambda \in \rd : | z (\lambda) | \leq 1 \} } | z (\lambda) |^{k\alpha} \prod_{j=1}^m | \psi_j (\lambda_j) | ^{-\alpha H_j - q_j} d\lambda  \\
& \quad + \int_{\{\lambda \in \rd : | z (\lambda) | > 1 \} } | z (\lambda) |^{k\alpha} \prod_{j=1}^m | \psi_j (\lambda_j) | ^{-\alpha H_j - q_j} d\lambda \Big) ^{\frac{1}{k\alpha}} \\
& \leq \Big(  \int_{\{\lambda \in \rd : | z (\lambda) | \leq 1 \} } | z (\lambda) |^{\alpha} \prod_{j=1}^m | \psi_j (\lambda_j) | ^{-\alpha H_j - q_j} d\lambda  \\
& \quad + \int_{\{\lambda \in \rd : | z (\lambda) | > 1 \} } | z (\lambda) |^{k\alpha + \alpha} \prod_{j=1}^m | \psi_j (\lambda_j) | ^{-\alpha H_j - q_j} d\lambda \Big) ^{\frac{1}{k\alpha}} \\
& \leq (2^{m+1})^{k \alpha}  \Big(  \int_\rd | z (\lambda) |^{\alpha} \prod_{j=1}^m | \psi_j (\lambda_j) | ^{-\alpha H_j - q_j} d\lambda \Big) ^{\frac{1}{k\alpha}} \\
& = (2^{m+1})^{k \alpha} \sigma (x,y)^{\frac{1}{k}}.
\end{align*}
Now, using \eqref{sigmak} and H\"older's inequality as before we obtain that
\begin{align}\label{cauchy3}
I & \leq \Big(  \int_\rd | \prod_{j=1}^m ( e^{i \langle x_j, \lambda_j \rangle} - 1) - \prod_{j=1}^m ( e^{i \langle y_j, \lambda_j \rangle} - 1) |^{k\alpha} \prod_{j=1}^m | \psi_j (\lambda_j) | ^{-\alpha H_j - q_j} d\lambda \Big) ^{\frac{1}{k\alpha}} \nonumber \\
& \quad \cdot \Big( \int_\rd \frac{1}{\big( \prod_{j=1}^m | \psi_j (\lambda_j) | ^{-\alpha H_j - q_j} \big)^{\frac{\beta'}{k\alpha}} } | \hat{\delta}_1  (r^{\tilde{E}_1^T} \lambda_1 ) \prod_{j=2}^m \hat{\delta}_j (\lambda_j) |^{\beta'} d\lambda \Big) ^{\frac{1}{\beta'}} \nonumber \\
& \leq 2^{k \alpha (m+1)} \sigma (x,y)^{\frac{1}{k}} \cdot r^{-\frac{H_1}{k} - \frac{q_1}{k\alpha}- \frac{q_1}{\beta'}} \nonumber \\
& \quad \cdot \Big( \int_\rd \frac{1}{\big( \prod_{j=1}^m | \psi_j (\lambda_j) | ^{-H_j - \frac{q_j}{\alpha}} \big)^{\beta'} } \prod_{j=1}^m | \hat{\delta}_j (\lambda_j) |^{\beta'} d\lambda \Big) ^{\frac{1}{\beta'}} \nonumber \\
& = \tilde{\tilde{C}} \cdot \big( \sigma (x,y) \cdot r^{-H_1 - kq_1} \big) ^{\frac{1}{k}},
\end{align}
where $\tilde{\tilde{C}} >0$ is a constant, which only depends on $H_1, \ldots, H_m, q_1, \ldots, q_m, k, \alpha, d$ and $\delta$. It is clear that \eqref{sigmatau} follows from \eqref{cauchy1} and \eqref{cauchy3}. This finishes the proof of the Theorem.
\end{proof}

We are now able to give a proof to Proposition \ref{sigmawl}. \newline \newline
\noindent \textit{Proof of Proposition \ref{sigmawl}.} Following Section 2.2, let $V_1, \ldots, V_{p_1}$ denote the spectral decomposition of $\mathbb{R}^{d_1}$ with respect to $\tilde{E}_1$ and let $O_i = V_1 \oplus \ldots \oplus V_i$ for $i \leq p_1$. We first assume that $p_1 = 1$. Let $C_5, C_6, C_7$ denote unspecified positive constants.  Since $p_1 = 1$, by Lemma \ref{pcbounds} one can find a constant $K>0$ such that for $\varepsilon >0$ (small) and $\| x_1 - y_1 \| \leq 2^{-l}$ we have
\begin{align}\label{taulemma}
\tau_{\tilde{E}_1 } (x_1 - y_1)^{-H_1} \leq \| x_1 - y_1 \| ^{-\frac{H_1}{a_{p_1}^1} - \varepsilon} .
\end{align}
Finally, using Theorem \ref{sigmabound}, \eqref{taulemma} and Lemma \ref{lemmaayache} we obtain
\begin{align*}
& \sum_{l=0}^\infty 2^{-l (1 - \gamma)} \int_{W_l} \sigma^{-1} (x,y) dx dy \\
&  \leq C_5 \sum_{l=0}^\infty 2^{-l (1 - \gamma)} \int_{W_l} \tau_{\tilde{E}_1 } (x_1 - y_1)^{-H_1} dx dy \\
& \leq C_6 \sum_{l=0}^\infty 2^{-l (1 - \gamma)} \int_{W_l} \| x_1 - y_1 \| ^{-\frac{H_1}{a_{p_1}^1} - \varepsilon} dx dy \\
& \leq C_6 \sum_{l=0}^\infty 2^{-l (1 - \gamma + d -\frac{H_1}{a_{p_1}^1} - \varepsilon)} < \infty ,
\end{align*}
since $\gamma < d+1 -\frac{H_1}{a_{p_1}^1} - \varepsilon$.

Now, assume that $p_1 \geq 2$. We will show that
$$J := \int_{W_l} \tau_{\tilde{E}_1 } (x_1 - y_1)^{-H_1} dx dy \leq \hat{C} \cdot 2^{-l (d-\frac{H_1}{a_{p_1}^1} - \varepsilon)} ,$$
for some constant $\hat{C} >0$ and some sufficiently small $\varepsilon >0$ in order to obtain the statement of Proposition \ref{sigmawl} from Theorem \ref{sigmabound} as above.

For $z = x_1 - y_1$, let us write $z = z_{p_1} + z_{p_1 - 1}$ for some $z_{p_1} \in V_{p_1}$ and $z_{p_1-1} \in O_{p_1-1}.$ Note that, since $V_{p_1}$ and $O_{p_1-1}$ are orthogonal in the chosen inner product, we have that $ \| z\| \leq 2^{-l}$ implies both $\| z_{p_1} \| \leq 2^{-l}$ and $\| z_{p_1-1} \| \leq 2^{-l}$. As before, let $C_7, \ldots, C_{10}$ denote unspecified positive constants. By a version of Lemma \ref{pcbounds}, restricted to the subspaces $V_{p_1}$ and $O_{p_1-1}$ respectively, as in the proof of \cite[Theorem 5.6]{BMS}, one can find a constant $\hat{K} >0$ such that for $\varepsilon >0 $ (small) we have
$$\tau_{\tilde{E}_1 } (z) \geq \hat{K} \Big( \| z_{p_1} \| ^{\frac{H_1}{a_{p_1}^1} + \varepsilon} + \| z_{p_1-1} \| ^{\frac{H_1}{a_{1}^1} + \varepsilon} \Big),$$
and from this we get
\begin{align*}
J & \leq C_7 \int_{\| z_{p_1} \| \leq 2^{-l} } \int_{\| z_{p_1-1} \| \leq 2^{-l} } \Big( \| z_{p_1} \| ^{\frac{H_1}{a_{p_1}^1} + \varepsilon} + \| z_{p_1-1} \| ^{\frac{H_1}{a_{1}^1} + \varepsilon} \Big) ^{-1} d z_{p_1} d z_{p_1-1} \\
& \quad \cdot \prod_{j=2}^m \int_{G_l^j} dx_j dy_j ,
\end{align*}
where the sets $G_l^j$ are defined as in the proof of Lemma \ref{lemmaayache}. Let $k = \dim V_{p_1}$ and observe that in the present case we have $1 \leq k \leq d_1 -1$. By using polar coordinates for both $V_{p_1}$ and $O_{p_1-1}$, we get that
\begin{align*}
J & \leq C_8 \int_0^{2^{-l}} \int_0^{2^{-l}} \Big( u ^{\frac{H_1}{a_{p_1}^1} + \varepsilon} + v ^{\frac{H_1}{a_{1}^1} + \varepsilon} \Big) ^{-1} u^{k-1} v^{d_1-k-1} du dv \cdot (2^{-l} ) ^{d-d_1} .
\end{align*}
The change of variables $u = tv$ and elementary integration yield
\begin{align*}
J & \leq C_8 \int_0^{2^{-l}} \int_0^{\frac{2^{-l}}{v}} \Big( (tv) ^{\frac{H_1}{a_{p_1}^1} + \varepsilon} + v ^{\frac{H_1}{a_{1}^1} + \varepsilon} \Big) ^{-1} (tv)^{k-1} v^{d_1-k-1} v dt dv \cdot (2^{-l} ) ^{d-d_1} \\
& \leq C_8 \int_0^{2^{-l}} \int_0^{\frac{2^{-l}}{v}} v^{- \frac{H_1}{a_{p_1}^1} - \varepsilon} \Big( t ^{\frac{H_1}{a_{p_1}^1} + \varepsilon} \Big) ^{-1} t^{k-1} v^{d_1-1} dt dv \cdot (2^{-l} ) ^{d-d_1} \\
& = C_8 \int_0^{2^{-l}}  v^{d_1 - 1 - \frac{H_1}{a_{p_1}^1} - \varepsilon} \int_0^{\frac{2^{-l}}{v}} t ^{k - \frac{H_1}{a_{p_1}^1} - \varepsilon - 1} dt dv \cdot (2^{-l} ) ^{d-d_1} \\
& = C_9 (2^{-l} ) ^{- \frac{H_1}{a_{p_1}^1} - \varepsilon + k} \int_0^{2^{-l}}  v^{d_1 - 1 - \frac{H_1}{a_{p_1}^1} - \varepsilon} v^{\frac{H_1}{a_{p_1}^1} + \varepsilon - k} dv \cdot (2^{-l} ) ^{d-d_1} \\
& = C_{10} (2^{-l}) ^{- \frac{H_1}{a_{p_1}^1} - \varepsilon + k}  (2^{-l}) ^{d_1-k} (2^{-l} ) ^{d-d_1} \\
& = C_{10} (2^{-l}) ^{d - \frac{H_1}{a_{p_1}^1} - \varepsilon} 
\end{align*}
and this finishes the proof of the Proposition. \hfill $\Box$
\newline

As an immediate consequence of Theorem \ref{hausdorff}, we obtain the following.

\begin{cor}\label{hoelderresult}
Let the assumptions of Theorem \ref{existence} hold. Then any continuous version of $X_\alpha$ admits $\min_{1 \leq j \leq m} \frac{H_j}{a_{p_j}^j}$ as the H\"older critical exponent.
\end{cor}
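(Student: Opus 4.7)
The plan is to combine the upper-bound direction, which already follows from Corollary \ref{exponent}, with a contradiction argument based on Theorem \ref{hausdorff} and Lemma \ref{upperbound}. Set $\beta = \min_{1 \leq j \leq m} \frac{H_j}{a_{p_j}^j}$ and let $X_\alpha^*$ denote any continuous version of $X_\alpha$.

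First I would record the easy half. By Corollary \ref{exponent}, for any $s \in (0, \beta)$ there is a positive finite random variable $Z$ such that the uniform H\"older condition \eqref{hoeldercondition} holds almost surely on the compact set $I = [0,1]^d$ (and the same argument applied on any other compact cube gives the statement on arbitrary compact $I$, since we only need to rescale via the operator scaling relation \eqref{OSSRS}). So it remains to show that \eqref{hoeldercondition} almost surely fails for every $s \in (\beta, 1)$.

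For the failure direction I would proceed by contradiction. Fix $s \in (\beta, 1)$ and suppose there is an event $A$ of positive probability on which some finite random constant $Z$ makes $|X_\alpha^*(x) - X_\alpha^*(y)| \leq Z \|x - y\|^s$ hold for all $x,y \in [0,1]^d$. On $A$, Lemma \ref{upperbound} applied path-by-path (to the deterministic realization of $X_\alpha^*$) yields
\[
\dim_{\mathcal{H}} G_{X_\alpha^*}\bigl([0,1]^d\bigr) \leq d+1-s < d+1-\beta.
\]
However, Theorem \ref{hausdorff} asserts that with probability one
\[
\dim_{\mathcal{H}} G_{X_\alpha^*}\bigl([0,1]^d\bigr) = d+1-\beta,
\]
so the two statements are incompatible on the event $A$ of positive probability. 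This contradiction forces $s$ to fail almost surely, and then taking a countable union over a sequence $s_n \downarrow \beta$ of rationals in $(\beta,1)$ shows the failure for every $s > \beta$ simultaneously almost surely.

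There is no serious obstacle here: the heavy lifting has already been done in Theorem \ref{hausdorff}, and the only thing to be careful about is quantifier order (establishing the failure on a single almost sure event valid for all $s > \beta$), which is handled by the standard trick of intersecting over a countable dense sequence. Combining both directions yields that $\beta = \min_{1 \leq j \leq m} \frac{H_j}{a_{p_j}^j}$ is the H\"older critical exponent of $X_\alpha^*$.
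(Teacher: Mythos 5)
Your proposal is correct and is essentially the paper's intended argument: the positive half is Corollary \ref{exponent}, and the failure for $s>\min_{1\leq j\leq m}\frac{H_j}{a_{p_j}^j}$ follows by contradiction from Lemma \ref{upperbound} applied pathwise together with the almost sure dimension formula of Theorem \ref{hausdorff} (the paper records the corollary as an immediate consequence of that theorem without further proof). The countable-intersection step at the end is a harmless standard refinement.
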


\begin{remark}
Let $\alpha = 2, d_j = \tilde{E}_j = 1$ for all $j=1, \ldots, m$ and consider the function $\psi (\xi_j) = | \xi_j |$ for all $\xi_j \in \mathbb{R}$. Clearly, $\psi_j$ is a homogeneous function and satisfies $\psi_j (\xi_j ) \neq 0$ for all $\xi_j \neq 0$. Thus, by Theorem \ref{existence}, we can define
$$X_2 (x) = \operatorname{Re} \int_\rd \prod_{j=1}^d ( e^{i x_j \xi_j} -1 ) |\xi_j|^{-H_j -\frac{1}{2}} W_2 (d \xi),  \quad x= (x_1, \ldots, x_d) \in \rd,$$
for all $0<H_j<1, j=1, \ldots, d$ and the statement of Theorem \ref{hausdorff} becomes
$$ \dim_{\mathcal{H}} G_{X_2} \big( [0,1]^d \big) = \dim_{\mathcal{B}} G_{X_2} \big( [0,1]^d \big) = d+1 - \min_{1 \leq j \leq d} H_j, \quad a.s.$$
Further, up to a multiplicative constant, the random field $X_2$ is a fractional Brownian sheet with Hurst indices $H_1, \ldots, H_d$ (see \cite{Herbin}). Thus, Theorem \ref{hausdorff} can be seen as a generalization of \cite[Theorem 1.3]{Aych}. Further, as noted above, for $m=1, d=d_1$ and $E=E_1 = \tilde{E}_1$ the random field $X_\alpha$ given by \eqref{harmonizable} coincides with the random field in \cite[Theorem 4.1]{BMS} and the statement of Theorem \ref{hausdorff} becomes
$$ \dim_{\mathcal{H}} G_{X_\alpha} \big( [0,1]^d \big) = \dim_{\mathcal{B}} G_{X_\alpha} \big( [0,1]^d \big) = d+1 - \frac{H_1}{a^1_{p_1}}, \quad a.s.$$
which is the statement of \cite[Theorem 5.6]{BMS} for $\alpha=2$ and \cite[Proposition 5.7]{BL} for $\alpha \in (0,2)$. We finally remark that Theorem \ref{hausdorff} can be proven similarly, if we replace $[0,1]^d$ in \eqref{dimensionresult} by any other compact cube of $\rd$.
\end{remark}

\bibliographystyle{plain}

\end{document}